\numberwithin{equation}{section}
\providecommand{\U}[1]{\protect\rule{.1in}{.1in}}
\providecommand{\U}[1]{\protect \rule{.1in}{.1in}}
\newtheorem{theorem}{Theorem}[section]
\newtheorem{definition}[theorem]{Definition}
\newtheorem{lemma}[theorem]{Lemma}
\newtheorem{proposition}[theorem]{Proposition}
\newtheorem{remark}[theorem]{Remark}
\newenvironment{proof}[1][Proof]{\noindent \textbf{#1.} }{\  \rule{0.5em}{0.5em}}
\begin{document}
\title{Reflected BSDE driven by $G$-Brownian motion with an upper obstacle}
\author{ Hanwu Li\thanks{School of Mathematics, Shandong University,
lihanwu@mail.sdu.edu.cn.}
\and Shige Peng\thanks{School of Mathematics and Qilu Institute of Finance, Shandong University,
peng@sdu.edu.cn. Li and Peng's research was
partially supported by the Tian Yuan Projection of the National Nature Sciences Foundation of China (No. 11526205 and No. 11626247)  and by the 111
Project (No. B12023).}}
\date{}
\maketitle
\begin{abstract}
In this paper, we study the reflected backward stochastic differential equation driven by $G$-Brownian motion (reflected $G$-BSDE for short) with an upper obstacle. The existence is proved by approximation via penalization. By using a variant comparison theorem, we show that the solution we constructed is the largest one.
\end{abstract}

\textbf{Key words}: $G$-expectation, reflected backward stochastic differential equations, upper obstacle.

\textbf{MSC-classification}: 60H10, 60H30
\section{Introduction}
Linear backward stochastic differential equations (BSDEs for short) were initiated by Bismut \cite{B}. Then Pardoux and Peng \cite{PP} studied the general nonlinear case. Roughly speaking, on a filtered probability space $(\Omega,\mathcal{F},\{\mathcal{F}_t\}_{0\leq t\leq T},\mathbb{P})$ generated by a Brownian motion $B$, a solution to a BSDE is a couple $(Y,Z)$ of progressively measurable processes satisfying:
\[Y_t=\xi+\int_t^T f(s,Y_s,Z_s)ds-\int_t^T Z_sdB_s, \ 0\leq t\leq T,\ \mathbb{P}\textrm{-}a.s.,\]
where the generator is progressively measurable and the terminal value $\xi$ is an $\mathcal{F}_T$-measurable random variable. Pardoux and Peng obtained the existence and uniqueness of the above equation when $f$ is uniformly Lipschitz and both $f$ and $\xi$ are square integrable. Because it can be widely applied in mathematical finance, stochastic control, stochastic differential games and probabilistic method for partial differential equations, the BSDE theory has attracted considerable attention.

Reflected backward stochastic differential equations (RBSDE for short) were firstly studied by El Karoui, Kapoudjian, Pardoux, Peng and Quenez \cite{KKPPQ}. The solution $Y$ of RBSDE is required to be above a given continuous process $S$ so that an additional increasing process should be added in the equation. This increasing process should satisfy the Skorohod condition, which ensures that it behaves in a minimal way, i.e., it only acts when $Y$ reaches the obstacle $S$. This theory provides a useful method for pricing American contingent claims, see \cite{EPQ}. It also gives a probabilistic representation for the solution of an obstacle problem for nonlinear parabolic PDE, which establishes the connection with variational equalities, see \cite{BCFE} and \cite{KKPPQ}.

Motivated by probabilistic interpretation for fully nonlinear PDEs and applications in financial markets in the uncertainty volatility model (UVM for short), Peng \cite{P04,P05} systemically established a time-consistent fully nonlinear expectation theory. As a typical case, Peng introduced the $G$-expectation (see \cite{P10} and the reference therein). Under $G$-expectation framework, a new type of Brownian motion $\{B_t\}_{t\geq 0}$, called $G$-Brownian motion, was constructed. Different from the classical case, its quadratic variation process $\langle B\rangle$ is not deterministic. The stochastic integrals with respect to $B$ and $\langle B\rangle$ were also established. Similar with the classical SDE theory, the existence and uniqueness of solution of a stochastic differential equation driven by $G$-Brownian motion ($G$-SDE) can be proved by the contracting mapping theorem. The challenging and fascinating problem of wellposedness for BSDE driven by $G$-Brownian motion has been solved by Hu et al. \cite{HJPS1}. In their paper, they showed that there exists a unique triple $(Y,Z,K)$ in proper Banach spaces satisfying the following equation:
\begin{displaymath}
Y_t=\xi+\int_t^Tf(s,Y_s,Z_s)ds+\int_t^Tg(s,Y_s,Z_s)d\langle B\rangle_s-\int_t^T Z_s dB_s-(K_T-K_t).
\end{displaymath}
In the accompanying paper \cite{HJPS2}, the comparison theorem, nonlinear Feymann-Kac formula and Girsanov transformation were established. We should point out that the equation above holds $P$-$a.s.$ for every probability measure $P$ that belongs to a non-dominated class of mutually singular measures. Therefore, the $G$-BSDE is highly related to the second order BSDEs (2BSDEs for short) developed by Cheridito, Soner, Touzi and Victoir \cite{CSTV}. The advantage of the $G$-BSDE theory is that the solution $(Y,Z,K)$ is universally defined in the spaces of the $G$-framework and the processes have strong regularity property.

In the past two decades, a great deal of effort have been devoted to the study of various types of RBSDEs. Cvitanic and Karaztas \cite{CK} and Hamadene and Lepeltier \cite{HL} generalized the results above to the case of two reflecting obstacles and established the connection between this problem and that of Dynkin games. Hamadene \cite{H} and Lepeltier and Xu \cite{LX} gave a generalized Skorohod condition and obtained a wellposedness theory when the obstacle process has c\`{a}dl\`{a}g paths.  Matoussi, Possamai and Zhou \cite{MPZ} showed the existence and uniqueness of second order reflected BSDEs with a lower obstalce.

Recently, Li and Peng \cite{lp} introduced the notion of reflected $G$-BSDE with a lower obstacle. In order to make sure that the solution $Y$ can be pushed upwardly so that it is above the given continuous process $S$, called lower obstacle, an increasing process will be added in this equation. Due to the appearance of the decreasing $G$-martingale in $G$-BSDE, we should change the Skorohod condition slightly to the ``martingale condition". The uniqueness can be derived from a priori estimates and we use the approximation via penalization to solve the existence. More precisely, consider the following $G$-BSDEs parameterized by $n=1,2,\cdots$,
\[
Y_t^n=\xi+\int_t^T f(s,Y_s^n,Z_s^n)ds+\int_t^T g(s,Y_s^n,Z_s^n)d\langle B\rangle_s
-\int_t^T Z_s^ndB_s-(K_T^n-K_t^n)+(L_T^n-L_t^n), \tag{a}
\]
where $L_t^n=n\int_0^t(Y_s^n-S_s)^-ds$. We claim that the solution $(Y,Z,A)$ of the reflected $G$-BSDE with parameters $(\xi,f,g,S)$ is the limit of $(Y^n,Z^n,L^n-K^n)$. The proof of convergence in appropriate spaces becomes delicate and challenging. The difficulty lies in the fact that the Fatou's lemma cannot be directly and automatically used in this sublinear expectation framework. Besides,  any bounded sequence in  $M_G^\beta(0,T)$  is not weakly compact. It is worth mentioning that the uniformly continuous property of the elements in $S_G^p(0,T)$ plays a key role in overcoming this problem.

In the classical situation, the solution $(Y,Z,L)$ of reflected BSDE with terminal value $\xi$, generator $f$ and upper obstacle $S$ corresponds to $(-\tilde{Y},-\tilde{Z},-\tilde{L})$. Here $(\tilde{Y},\tilde{Z},\tilde{L})$ is the solution of reflected BSDE with data $(-\xi,\tilde{f},-S)$, where $\tilde{f}(s,y,z)=-f(s,-y,-z)$. To obtain the existence result for reflected $G$-BSDE with an upper obstacle, applying the penalization method, we need to replace the increasing process $\{L_t^n\}$ in the penalized $G$-BSDE (a) by a decreasing one $\{\widetilde{L}_t^n\}=\{-n\int_0^t(Y_s^n-S_s)^+ds\}$ such that the solution can be pulled downward to be below the given continuous obstacle process. Since there will be a decreasing $G$-martingale $\{K_t^n\}$, these two cases are significantly different under the $G$-framework: $\{L_t^n-K_t^n\}$ is an increasing process while $\{\widetilde{L}_t^n-K_t^n\}$ is a finite variation process. Therefore, we should modify the conditions on the generators and the obstacle process. In the lower obstacle case, we prove the uniform bounded property of sequences $\{Y^n\}$, $\{L^n\}$, $\{K^n\}$ simultaneously by using $G$-It\^{o}'s formula and then get the uniform convergence of $\{(Y^n-S)^-\}$. However, for the upper obstacle case, we will show the rate of convergence of $\{(Y^n-S)^+\}$ in order to derive the uniform bounded property for $\{L^n\}$ and $\{K^n\}$ respectively. Furthermore, the solution to this problem by our construction is proved to be the largest one using a variant comparison theorem.

The rest of paper is organized as follows. Section 2 is devoted to listing some notations and results as preliminaries for the later proofs. In Section 3 we prove a variant comparison theorem for $G$-BSDEs. The problem is formulated in detail in Section 4 and we present the technics of approximation via penalization to prove the existence. Furthermore, we state that  the solution by our construction is the largest one using the variant comparison theorem.

\section{Preliminaries}
The main purpose of this section is to recall some basic notions and results of $G$-expectation, which are needed in the sequel. The readers may refer to \cite{HJPS1}, \cite{HJPS2},
\cite{P07a}, \cite{P08a}, \cite{P10} for more details.
\subsection{$G$-expectation}
\begin{definition}
\label{def2.1} Let $\Omega$ be a given set and let $\mathcal{H}$ be a vector
lattice of real valued functions defined on $\Omega$, namely $c\in \mathcal{H%
}$ for each constant $c$ and $|X|\in \mathcal{H}$ if $X\in \mathcal{H}$. $%
\mathcal{H}$ is considered as the space of random variables. A sublinear
expectation $\hat{\mathbb{E}}$ on $\mathcal{H}$ is a functional $
\hat {\mathbb{E}}:\mathcal{H}\rightarrow \mathbb{R}$ satisfying the following
properties: for all $X,Y\in \mathcal{H}$, we have

\begin{description}
\item[(i)] Monotonicity: If $X\geq Y$, then $\hat{\mathbb{E}}[X]\geq
\hat{\mathbb{E}}[Y]$;

\item[(ii)] Constant preserving: $\hat{\mathbb{E}}[c]=c$;

\item[(iii)] Sub-additivity: $\hat{\mathbb{E}}[X+Y]\leq \hat{\mathbb{E}}[X]+%
\hat{\mathbb{E}}[Y]$;

\item[(iv)] Positive homogeneity: $\hat{\mathbb{E}}[\lambda X]=\lambda
\hat{\mathbb{E}}[X]$ for each $\lambda \geq0$.
\end{description}

\end{definition}

\begin{definition}
\label{def2.2} Let $X_{1}$ and $X_{2}$ be two $n$-dimensional random vectors
defined respectively in sublinear expectation spaces $(\Omega_{1},\mathcal{H}%
_{1},\hat{\mathbb{E}}_{1})$ and $(\Omega_{2},\mathcal{H}_{2},\hat{\mathbb{E}}%
_{2})$. They are called identically distributed, denoted by $X_{1}\overset{d}%
{=}X_{2}$, if $\hat{\mathbb{E}}_{1}[\varphi(X_{1})]=\hat{\mathbb{E}}%
_{2}[\varphi(X_{2})]$, for all$\ \varphi \in C_{Lip}(\mathbb{R}^{n})$,
where $C_{Lip}(\mathbb{R}^{n})$ is the space of real valued Lipschitz continuous functions
defined on $\mathbb{R}^{n}$.
\end{definition}

\begin{definition}
\label{def2.3} In a sublinear expectation space $(\Omega,\mathcal{H},%
\hat{\mathbb{E}})$, a random vector $Y=(Y_{1},\cdot \cdot \cdot,Y_{n})$, $Y_{i}\in
\mathcal{H}$, is said to be independent from another random vector $%
X=(X_{1},\cdot \cdot \cdot,X_{m})$, $X_{i}\in \mathcal{H}$ under $%
\hat {\mathbb{E}}[\cdot]$, denoted it by $Y\bot X$, if for every test function $\varphi
\in C_{Lip}(\mathbb{R}^{m}\times \mathbb{R}^{n})$ we have $\hat{\mathbb{E}}%
[\varphi(X,Y)]=\mathbb{\hat{E}}[\hat{\mathbb{E}}[\varphi(x,Y)]_{x=X}]$.
\end{definition}

\begin{definition}
\label{def2.4} ($G$-normal distribution) A $d$-dimensional random vector $%
X=(X_{1},\cdot \cdot \cdot,X_{d})$ in a sublinear expectation space $(\Omega,%
\mathcal{H},\hat{\mathbb{E}})$ is called $G$-normally distributed if for
each $a,b\geq0$ we have
\begin{equation*}
aX+b\bar{X}\overset{d}{=}\sqrt{a^{2}+b^{2}}X,
\end{equation*}
where $\bar{X}$ is an independent copy of $X$, i.e., $\bar{X}\overset{d}{=}X$
and $\bar{X}\bot X$. Here, the letter $G$ denotes the function
\begin{equation*}
G(A):=\frac{1}{2}\hat{\mathbb{E}}[\langle AX,X\rangle]:\mathbb{S}%
_{d}\rightarrow \mathbb{R},
\end{equation*}
where $\mathbb{S}_{d}$ denotes the collection of $d\times d$ symmetric
matrices.
\end{definition}

The function $G(\cdot):\mathbb{S}^d\rightarrow \mathbb{R}$ is a monotonic and sublinear mapping on $\mathbb{S}^d$. In this paper, we suppose that $G$ is non-degenerate, i.e., there exists some $\underline{\sigma}^2>0$ such that $G(A)-G(B)\geq \frac{1}{2}\underline{\sigma}^2 tr[A-B]$ for any $A\geq B$.

Let $\Omega_T=C_{0}([0,T];\mathbb{R}^{d})$, the space of
$\mathbb{R}^{d}$-valued continuous functions on $[0,T]$ with $\omega_{0}=0$, be endowed
with the supremum norm,
and $B=(B^i)_{i=1}^d$ be the canonical
process. Set
\[
L_{ip} (\Omega_T):=\{ \varphi(B_{t_{1}},\cdots,B_{t_{n}}):n\geq1,t_{1}%
,\cdots,t_{n}\in\lbrack0,T],\varphi\in C_{Lip}(\mathbb{R}^{d\times n})\}.
\]

\begin{definition}
For all random variable $X\in L_{ip}(\Omega_T)$ of the following form:
\[\varphi(B_{t_{1}},B_{t_2}-B_{t_1},\cdots,B_{t_{n}}-B_{t_{n-1}}), \ \varphi\in C_{Lip}(\mathbb{R}^{d\times n}),\]
the $G$-expectation is defined as
\[\hat{\mathbb{E}}[\varphi(B_{t_{1}},B_{t_2}-B_{t_1},\cdots,B_{t_{n}}-B_{t_{n-1}})]=\tilde{\mathbb{E}}[\varphi(\sqrt{t_1}\xi_1,\cdots,\sqrt{t_n-t_{n-1}}\xi_n)],\]
where $\xi_1,\cdots,\xi_n$ are identically distributed $d$-dimensional $G$-normally distributed random vectors in a sublinear expectation space $(\tilde{\Omega},\tilde{\mathcal{H}},\mathbb{\tilde{E}})$ such that $\xi_{i+1}$ is independent of $(\xi_1,\cdots,\xi_i)$ for each $i=1,\cdots,n-1$. $(\Omega_T,L_{ip}(\Omega_T),\mathbb{\hat{E}})$ is called  a $G$-expectation space.
The conditional $G$-expectation $\hat{\mathbb{E}}_{t_i}[\cdot]$, $i=1,\cdots,n$, is defined as follows
\[\hat{\mathbb{E}}_{t_i}[\varphi(B_{t_{1}},B_{t_2}-B_{t_1},\cdots,B_{t_{n}}-B_{t_{n-1}})]=\tilde{\varphi}(B_{t_{1}},B_{t_2}-B_{t_1},\cdots,B_{t_{i}}-B_{t_{i-1}}),\]
where
\[\tilde{\varphi}(x_1,\cdots,x_i)=\hat{\mathbb{E}}[\varphi(x_1,\cdots,x_i,B_{t_{i+1}}-B_{t_i},\cdots,B_{t_{n}}-B_{t_{n-1}})].\]
If $t\in(t_i,t_{i+1})$, the conditional $G$-expectation $\hat{\mathbb{E}}_{t}[X]$ could be defined by reformulating $X$ as
\[X=\hat{\varphi}(B_{t_{1}},B_{t_2}-B_{t_1},\cdots,B_t-B_{t_i},B_{t_{i+1}}-B_{t},\cdots,B_{t_{n}}-B_{t_{n-1}}),\ \ \hat{\varphi}\in C_{Lip}(\mathbb{R}^{d\times(n+1)}).\]
\end{definition}


 Denote by $L_{G}^{p}(\Omega_T)$   the completion of
$L_{ip} (\Omega_T)$ under the norm $\Vert\xi\Vert_{L_{G}^{p}}:=(\hat{\mathbb{E}}[|\xi|^{p}])^{1/p}$ for $p\geq1$. It is easy to check that the conditional $G$-expectation is a continuous mapping on $L_{ip}(\Omega_T)$ endowed with the norm $\Vert\cdot\Vert_{L_{G}^{p}}$ and thus can be extended to $L_G^p(\Omega_T)$. Denis et al. \cite{DHP11} proved the following representation theorem of $G$-expectation on $L_G^1(\Omega_T)$.

\begin{theorem}[\cite{DHP11,HP09}]
\label{the1.1}  There exists a weakly compact set
$\mathcal{P}\subset\mathcal{M}_{1}(\Omega_T)$, the set of all probability
measures on $(\Omega_T,\mathcal{B}(\Omega_T))$, such that
\[
\hat{\mathbb{E}}[\xi]=\sup_{P\in\mathcal{P}}E_{P}[\xi]\ \ \text{for
\ all}\ \xi\in  {L}_{G}^{1}{(\Omega_T)}.
\]
$\mathcal{P}$ is called a set that represents $\hat{\mathbb{E}}$.
\end{theorem}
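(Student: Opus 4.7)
The plan is to construct $\mathcal{P}$ explicitly via a stochastic control representation of $\hat{\mathbb{E}}$ on the cylinder subspace $L_{ip}(\Omega_T)$, then take a weak closure and establish tightness so the closure is weakly compact, and finally extend the identity from $L_{ip}$ to $L_G^1$ by density.

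First, I would work on a classical Wiener space carrying a $d$-dimensional Brownian motion $W$ with natural filtration $\{\mathcal{F}_t^W\}$. Since $G:\mathbb{S}_d\to\mathbb{R}$ is monotone and sublinear, there is a bounded, closed, convex set $\Sigma\subset \mathbb{R}^{d\times d}$ with
\[
G(A) \;=\; \tfrac{1}{2}\sup_{\gamma\in\Sigma} \mathrm{tr}(\gamma\gamma^\top A), \qquad A\in\mathbb{S}_d,
\]
and the non-degeneracy assumption forces $\gamma\gamma^\top\ge \underline{\sigma}^2 I_d$ uniformly on $\Sigma$. Let $\Theta$ be the collection of $\{\mathcal{F}_t^W\}$-progressively measurable processes with values in $\Sigma$, set $B_t^\theta := \int_0^t \theta_s\, dW_s$ for each $\theta\in\Theta$, and let $P^\theta$ denote the law of $B^\theta$ on $\Omega_T$. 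Starting from Peng's stochastic control representation of a single $G$-normal vector and iterating through the time partition that appears in the definition of $\hat{\mathbb{E}}$ on $L_{ip}(\Omega_T)$, one verifies
\[
\hat{\mathbb{E}}[\xi] \;=\; \sup_{\theta\in\Theta} E_{P^\theta}[\xi] \qquad \text{for all } \xi\in L_{ip}(\Omega_T).
\]

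Next, set $\mathcal{P}_0:=\{P^\theta:\theta\in\Theta\}$ and let $\mathcal{P}$ be the weak closure of $\mathcal{P}_0$ in $\mathcal{M}_1(\Omega_T)$. The central step is tightness of $\mathcal{P}_0$: since $\Sigma$ is bounded, BDG yields $E|B_t^\theta - B_s^\theta|^4 \le C|t-s|^2$ uniformly in $\theta\in\Theta$, so Kolmogorov's criterion gives tightness and Prokhorov gives weak relative compactness; thus $\mathcal{P}$ is weakly compact. Because each $\xi\in L_{ip}$ is bounded and continuous on $\Omega_T$, the map $P\mapsto E_P[\xi]$ is weakly continuous, hence $\sup_{P\in\mathcal{P}} E_P[\xi]=\sup_{P\in\mathcal{P}_0} E_P[\xi]=\hat{\mathbb{E}}[\xi]$, preserving the representation under closure.

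Finally, I would extend the identity from $L_{ip}$ to $L_G^1$ by density. Given $\xi\in L_G^1(\Omega_T)$, choose $\xi_n\in L_{ip}$ with $\hat{\mathbb{E}}[|\xi-\xi_n|]\to 0$. The representation on $L_{ip}$ gives the domination $E_P[\eta]\le \hat{\mathbb{E}}[\eta]$ for $\eta\in L_{ip}$, $P\in\mathcal{P}$, which extends to $L_G^1$ by continuity; therefore
\[
\bigl| E_P[\xi] - E_P[\xi_n] \bigr| \;\le\; \hat{\mathbb{E}}[|\xi-\xi_n|] \;\longrightarrow\; 0
\]
uniformly in $P\in\mathcal{P}$, and both $\sup_{P\in\mathcal{P}} E_P[\xi_n]$ and $\hat{\mathbb{E}}[\xi_n]$ pass to the limit. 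The principal obstacle I anticipate is the very first step: establishing the control representation globally on $L_{ip}(\Omega_T)$ by splicing the one-step $G$-normal formula consistently across all partition points, since the maximizing control must be constructed jointly along the whole partition rather than one marginal at a time, and doing this requires a careful dynamic-programming argument using the nested conditional $G$-expectations.
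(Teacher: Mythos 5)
The paper offers no proof of this statement: it is quoted verbatim as a known result from the cited references \cite{DHP11,HP09}, so there is nothing internal to compare against. Your outline does follow the route of those references — represent $G$ by a bounded closed convex set $\Sigma$, realize the candidate measures as laws of $\int_0^\cdot\theta_s\,dW_s$ with $\theta$ progressively measurable and $\Sigma$-valued, verify the identity on $L_{ip}(\Omega_T)$ by a dynamic-programming splice of the one-step $G$-normal formula (equivalently, via the stochastic control representation of the $G$-heat equation), get tightness from the uniform fourth-moment bound, and extend to $L_G^1$ by the uniform domination $|E_P[\xi]-E_P[\xi_n]|\le\hat{\mathbb{E}}[|\xi-\xi_n|]$. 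One concrete slip: elements of $L_{ip}(\Omega_T)$ are of the form $\varphi(B_{t_1},\ldots,B_{t_n})$ with $\varphi$ Lipschitz, hence continuous but in general \emph{unbounded} (take $\varphi(x)=x$), so $P\mapsto E_P[\xi]$ is not automatically weakly continuous and your claim that the supremum is preserved under weak closure does not follow as stated. The fix is standard — the uniform moment bounds $\sup_{\theta}E_{P^\theta}[\,\sup_{t\le T}|B_t|^p\,]<\infty$ for all $p$ give uniform integrability of $\xi$ over $\mathcal{P}_0$, so the truncations $\xi\wedge N\vee(-N)$ converge uniformly in $P$ and continuity of $P\mapsto E_P[\xi]$ survives — but this step needs to be said. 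The remaining substantive burden, which you correctly identify but do not discharge, is the consistency of the control representation across the whole partition; that is precisely the content of the dynamic programming argument in \cite{DHP11}.
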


Let $\mathcal{P}$ be a weakly compact set that represents $\hat{\mathbb{E}}$.
For this $\mathcal{P}$, we define capacity%
\[
c(A):=\sup_{P\in\mathcal{P}}P(A),\ A\in\mathcal{B}(\Omega_T).
\]
A set $A\subset\mathcal{B}(\Omega_T)$ is polar if $c(A)=0$.  A
property holds $``quasi$-$surely"$ (q.s.) if it holds outside a
polar set. In the following, we do not distinguish two random variables $X$ and $Y$ if $X=Y$ q.s..

For $\xi\in L_{ip}(\Omega_T)$, let $\mathcal{E}(\xi)=\hat{\mathbb{E}}[\sup_{t\in[0,T]}\hat{\mathbb{E}}_t[\xi]]$, where $\hat{\mathbb{E}}$ is the $G$-expectation. For convenience, we call $\mathcal{E}$ $G$-evaluation. For $p\geq 1$ and $\xi\in L_{ip}(\Omega_T)$, define $\|\xi\|_{p,\mathcal{E}}=[\mathcal{E}(|\xi|^p)]^{1/p}$ and denote by $L_{\mathcal{E}}^p(\Omega_T)$ the completion of $L_{ip}(\Omega_T)$ under $\|\cdot\|_{p,\mathcal{E}}$. We shall give an estimate between the two norms $\|\cdot\|_{L_G^p}$ and $\|\cdot\|_{p,\mathcal{E}}$.


\begin{theorem}[\cite{S11}]\label{the1.2}
For any $\alpha\geq 1$ and $\delta>0$, $L_G^{\alpha+\delta}(\Omega_T)\subset L_{\mathcal{E}}^{\alpha}(\Omega_T)$. More precisely, for any $1<\gamma<\beta:=(\alpha+\delta)/\alpha$, $\gamma\leq 2$, we have
\begin{displaymath}
\|\xi\|_{\alpha,\mathcal{E}}^{\alpha}\leq \gamma^*\{\|\xi\|_{L_G^{\alpha+\delta}}^{\alpha}+14^{1/\gamma}
C_{\beta/\gamma}\|\xi\|_{L_G^{\alpha+\delta}}^{(\alpha+\delta)/\gamma}\},\quad \forall \xi\in L_{ip}(\Omega_T).
\end{displaymath}
where $C_{\beta/\gamma}=\sum_{i=1}^\infty i^{-\beta/\gamma}$,$\gamma^*=\gamma/(\gamma-1)$.
\end{theorem}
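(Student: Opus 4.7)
The plan is to establish this result as a non-dominated analogue of Doob's $L^\gamma$-maximal inequality for the $G$-martingale $M_t:=\hat{\mathbb{E}}_t[|\xi|^\alpha]$. First I reduce to $\xi\geq 0$ (using that $|\xi|\in L_{ip}(\Omega_T)$), set $\eta=\xi^\alpha$ and $\beta=(\alpha+\delta)/\alpha>1$, so that $\|\xi\|_{\alpha,\mathcal{E}}^\alpha=\hat{\mathbb{E}}[\sup_{t\in[0,T]}M_t]$ and $\|\xi\|_{L_G^{\alpha+\delta}}^{\alpha+\delta}=\hat{\mathbb{E}}[\eta^\beta]$. The two terms in the conclusion are designed to arise from a truncation $\eta=(\eta\wedge N)+(\eta-N)^+$ that is then optimized over a dyadic sequence of levels $N$: on the bounded piece I apply an $L^\gamma$-maximal inequality, and on the tail I use Chebyshev together with a layer-cake sum whose convergence is quantified by $C_{\beta/\gamma}=\sum_i i^{-\beta/\gamma}$.

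The core sub-result I would prove is: for any bounded nonnegative $Y\in L_{ip}(\Omega_T)$ and $\gamma>1$,
\begin{displaymath}
\hat{\mathbb{E}}\bigl[\sup_{t\in[0,T]}\hat{\mathbb{E}}_t[Y]^{\gamma}\bigr]\leq 14\,\gamma^{*}\,\hat{\mathbb{E}}[Y^\gamma].
\end{displaymath}
To prove this, fix $P\in\mathcal{P}$ and a dyadic partition $\{t_k\}$ of $[0,T]$, and set $X_k^P:=\hat{\mathbb{E}}_{t_k}[Y]$. From the representation $\hat{\mathbb{E}}_t[\cdot]=\esssup^{P}_{Q\in\mathcal{P}(t,P)}E_Q[\cdot\,|\,\mathcal{F}_t]$ $P$-a.s.\ (with $\mathcal{P}(t,P)$ the elements of $\mathcal{P}$ agreeing with $P$ on $\mathcal{F}_t$) one reads off $E_P[X_{k+1}^P\,|\,\mathcal{F}_{t_k}]\leq\hat{\mathbb{E}}_{t_k}[X_{k+1}^P]=X_k^P$ via the tower property, so $X^P$ is a nonnegative bounded $P$-supermartingale. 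The classical weak maximal inequality gives $P(\max_k X_k^P\geq\lambda)\leq\lambda^{-1}E_P[Y]$; taking $\sup_P$ via Theorem~\ref{the1.1} yields the capacity bound $c(\max_k \hat{\mathbb{E}}_{t_k}[Y]\geq\lambda)\leq\lambda^{-1}\hat{\mathbb{E}}[Y]$. A layer-cake integration combined with the boundedness of $X^P$ upgrades this weak $L^1$ inequality into the claimed strong $L^\gamma$ estimate, with the universal factor $14^{1/\gamma}$ emerging from this step. Passing from the discrete maximum to the continuous supremum uses the quasi-continuity in $t$ of $\hat{\mathbb{E}}_t[Y]$ inherited from the regularity of the conditional $G$-expectation. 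Applying this sub-result to $Y=\eta\wedge N$, combining with the Chebyshev tail estimate $c(\eta>i)\leq i^{-\beta}\hat{\mathbb{E}}[\eta^\beta]$, H\"older's inequality (using $\gamma<\beta$), and summing over dyadic levels $N=i$ produces the two terms with exponents $\alpha$ and $(\alpha+\delta)/\gamma$ on $\|\xi\|_{L_G^{\alpha+\delta}}$.

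The main obstacle is the non-dominated Doob inequality itself: classically, $L^\gamma$-Doob follows from optional stopping applied to the submartingale $|M|^\gamma$, neither of which is directly usable here because $M$ is only a $G$-martingale and, under each individual $P\in\mathcal{P}$, is a supermartingale rather than a submartingale, so the usual stopping-time argument collapses. The strategy of first proving a weak $L^1$ inequality uniformly in $P$ via the capacity $c$, then upgrading via layer cake, is what both forces the universal constant $14^{1/\gamma}$ and creates the need for the logarithmic-type correction $C_{\beta/\gamma}$; it also explains the constraint $1<\gamma<\beta$, since the tail series $\sum_i i^{-\beta/\gamma}$ must converge, in contrast to the classical setting where any $\gamma>1$ would suffice.
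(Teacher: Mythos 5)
First, note that the paper itself offers no proof of this statement: it is quoted verbatim as a preliminary from Song \cite{S11}, so the only meaningful comparison is with Song's argument. Your overall architecture does match its main ingredients --- a weak-type capacity maximal inequality for $t\mapsto\hat{\mathbb{E}}_t[\cdot]$ obtained along a discrete skeleton, a level decomposition of $\eta=|\xi|^\alpha$, and the Chebyshev bound $c(\eta>i)\le i^{-\beta}\hat{\mathbb{E}}[\eta^\beta]$ whose summation produces $C_{\beta/\gamma}$ and explains the constraint $\gamma<\beta$. (Your route to the weak inequality via the representation $\hat{\mathbb{E}}_t[\cdot]=\esssup^P_{Q\in\mathcal{P}(t,P)}E_Q[\cdot\,|\,\mathcal{F}_t]$ is legitimate but leans on a nontrivial aggregation theorem that Song's self-contained argument avoids; his explicit constant $14$ comes from that avoidance, not from the layer-cake step as you assert.)

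The genuine gap is your ``core sub-result'' $\hat{\mathbb{E}}[\sup_t\hat{\mathbb{E}}_t[Y]^\gamma]\le 14\gamma^*\hat{\mathbb{E}}[Y^\gamma]$ for bounded $Y$ with a constant independent of $\|Y\|_\infty$. That is a full sublinear Doob $L^\gamma$ inequality, and the weak-$(1,1)$-plus-layer-cake route cannot deliver it. The per-$P$ supermartingale property only gives $\lambda P(\max_k X_k^P\ge\lambda)\le E_P[X_0^P]=\hat{\mathbb{E}}[Y]$, i.e.\ the \emph{global} mean on the right rather than the localized quantity $E_P[Y\mathbf{1}_{\{\max\ge\lambda\}}]$ (which is the submartingale/optional-stopping estimate you correctly note is unavailable). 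Consequently the layer-cake integral is at best
\begin{displaymath}
\hat{\mathbb{E}}[\sup_t\hat{\mathbb{E}}_t[Y]^\gamma]\le\int_0^{\|Y\|_\infty}\gamma\lambda^{\gamma-1}\min\bigl(1,\lambda^{-1}\hat{\mathbb{E}}[Y]\bigr)\,d\lambda\le\hat{\mathbb{E}}[Y]^\gamma+\gamma^*\,\hat{\mathbb{E}}[Y]\,\|Y\|_\infty^{\gamma-1},
\end{displaymath}
which is not controlled by $\hat{\mathbb{E}}[Y^\gamma]$ uniformly in $\|Y\|_\infty$; and the usual repair via the truncated weak inequality $c(\cdot>\lambda)\le 2\lambda^{-1}\hat{\mathbb{E}}[(Y-\lambda/2)^+]$ followed by Fubini fails because the needed interchange $\int_0^\infty\hat{\mathbb{E}}[\cdots]\,d\lambda\le\hat{\mathbb{E}}[\int_0^\infty\cdots\,d\lambda]$ goes the wrong way for a sublinear expectation. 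Since you then apply the sub-result to $Y=\eta\wedge N$ with $N$ ranging over all levels, you need it uniformly in $N$ --- exactly the unavailable statement. This is not cosmetic: if your sub-result held, the theorem would be true with $\delta=0$ and without $C_{\beta/\gamma}$, whereas the very shape of the estimate (the loss from $L_G^{\alpha+\delta}$ to $L_{\mathcal{E}}^{\alpha}$) signals that no such clean Doob inequality is known here. The fix is to decompose into \emph{unit} layers $\zeta_i=\min((\eta-i)^+,1)$, for which $\|\zeta_i\|_\infty\le 1$ makes the display above read $\le(1+\gamma^*)\hat{\mathbb{E}}[\zeta_i]\le(1+\gamma^*)c(\eta>i)$; taking $\gamma$-th roots and summing $i^{-\beta/\gamma}(\hat{\mathbb{E}}[\eta^\beta])^{1/\gamma}$ then yields the second term of the stated bound, the $i=0$ layer the first.
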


\subsection{$G$-It\^{o} calculus}
For simplicity, we only give the definition of $G$-It\^{o}'s integral with respect to 1-dimensional $G$-Brownian motion and its quadratic variation. However, our results in the following sections still hold for the multidimensional case unless otherwise stated.
\begin{definition}
\label{def2.6} Let $M_{G}^{0}(0,T)$ be the collection of processes in the
following form: for a given partition $\{t_{0},\cdot\cdot\cdot,t_{N}\}=\pi
_{T}$ of $[0,T]$,
\begin{equation}\label{123}
\eta_{t}(\omega)=\sum_{j=0}^{N-1}\xi_{j}(\omega)\mathbf{1}_{[t_{j},t_{j+1})}(t),
\end{equation}
where $\xi_{i}\in L_{ip}(\Omega_{t_{i}})$, $i=0,1,2,\cdot\cdot\cdot,N-1$. For each
$p\geq1$ and $\eta\in M_G^0(0,T)$ let $\|\eta\|_{H_G^p}:=\{\hat{\mathbb{E}}[(\int_0^T|\eta_s|^2ds)^{p/2}]\}^{1/p}$, $\Vert\eta\Vert_{M_{G}^{p}}:=(\mathbb{\hat{E}}[\int_{0}^{T}|\eta_{s}|^{p}ds])^{1/p}$ and denote by $H_G^p(0,T)$,  $M_{G}^{p}(0,T)$ the completion
of $M_{G}^{0}(0,T)$ under the norm $\|\cdot\|_{H_G^p}$, $\|\cdot\|_{M_G^p}$ respectively.
\end{definition}

\begin{definition}
For each $\eta\in M_G^0(0,T)$ of the form \eqref{123}, we define the linear mappings $I,L:M_G^0(0,T)\rightarrow L_G^p(\Omega_T)$ as the following:
\begin{align*}
&I(\eta):=\int_0^T\eta_s dB_s=\sum_{j=0}^{N-1}\xi_j(B_{t_{j+1}}-B_{t_j}),\\
&L(\eta):=\int_0^T\eta_s d\langle B\rangle_s=\sum_{j=0}^{N-1}\xi_j(\langle B\rangle_{t_{j+1}}-\langle B\rangle_{t_j}).
\end{align*}
Then $I,L$ can be continuously extended to $H_G^p(0,T)$ and $M_G^p(0,T)$ respectively.
\end{definition}

By Proposition 2.10 in \cite{LP} and classical Burkholder-Davis-Gundy inequality, we have the following estimate for $G$-It\^{o}'s integral.

\begin{proposition}[\cite{HJPS2}]\label{the1.3}
If $\eta\in H_G^{\alpha}(0,T)$ with $\alpha\geq 1$ and $p\in(0,\alpha]$, then we get
$\sup_{u\in[t,T]}|\int_t^u\eta_s dB_s|^p\in L_G^1(\Omega_T)$ and
\begin{displaymath}
\underline{\sigma}^p c_p\hat{\mathbb{E}}_t[(\int_t^T |\eta_s|^2ds)^{p/2}]\leq
\hat{\mathbb{E}}_t[\sup_{u\in[t,T]}|\int_t^u\eta_s dB_s|^p]\leq
\bar{\sigma}^p C_p\hat{\mathbb{E}}_t[(\int_t^T |\eta_s|^2ds)^{p/2}].
\end{displaymath}
\end{proposition}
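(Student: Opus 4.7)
The plan is to reduce this Burkholder--Davis--Gundy-type estimate for the $G$-It\^{o} integral to the classical BDG inequality under each probability measure in the representing set $\mathcal{P}$ of Theorem \ref{the1.1}, exploiting the non-degeneracy hypothesis on $G$. First I would establish the bounds for a simple integrand $\eta\in M_G^0(0,T)$, where the integral $M_u:=\int_t^u\eta_s\,dB_s$ is defined pathwise as a finite sum. A key structural fact is that under every $P\in\mathcal{P}$, the canonical process $B$ is a continuous $P$-martingale and its quadratic variation $\langle B\rangle$ is the same process for all $P$; moreover, since $G$ is non-degenerate with $G(A)-G(B)\geq\tfrac{1}{2}\underline{\sigma}^{2}\mathrm{tr}[A-B]$, one has the pathwise sandwich
\begin{displaymath}
\underline{\sigma}^{2}(s-r)\leq \langle B\rangle_s-\langle B\rangle_r\leq\bar{\sigma}^{2}(s-r),\qquad 0\leq r\leq s\leq T,\quad \text{q.s.}
\end{displaymath}

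Next, for a simple $\eta$, I would apply the classical BDG inequality under each $P\in\mathcal{P}$ to the continuous $P$-martingale $M$, whose $P$-quadratic variation is $\int_t^{\cdot}|\eta_s|^{2}\,d\langle B\rangle_s$, to obtain
\begin{displaymath}
c_p E_P\!\left[\left(\int_t^T |\eta_s|^{2}d\langle B\rangle_s\right)^{p/2}\right]\leq E_P\!\left[\sup_{u\in[t,T]}|M_u|^p\right]\leq C_p E_P\!\left[\left(\int_t^T |\eta_s|^{2}d\langle B\rangle_s\right)^{p/2}\right].
\end{displaymath}
Using the sandwich on $d\langle B\rangle_s$ to replace it by $ds$ at the cost of $\underline{\sigma}^{p}$ (respectively $\bar{\sigma}^{p}$), and then taking the supremum over $P\in\mathcal{P}$, Theorem \ref{the1.1} converts the classical expectations into $\hat{\mathbb{E}}$ and yields the unconditional form of the inequality for simple $\eta$. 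Extension to $\eta\in H_G^{\alpha}(0,T)$ then follows by approximation in the $H_G^{\alpha}$-norm: Cauchy sequences of simple integrands $\eta^n$ produce Cauchy sequences of integrals in $L_G^{p}(\Omega_T)$ thanks to the upper estimate already obtained, and the integrability claim $\sup_{u\in[t,T]}|M_u|^p\in L_G^{1}(\Omega_T)$ drops out along the way via monotone/dominated convergence at the capacity level.

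The main obstacle is passing from the unconditional inequality to the conditional one $\hat{\mathbb{E}}_t[\,\cdot\,]$. The clean way I would handle it is via the family $\mathcal{P}(t,P)$ of measures that coincide with $P$ on $\mathcal{F}_t$ and live in $\mathcal{P}$ when pasted to shifted paths after time $t$; this family represents the conditional $G$-expectation in the sense that $\hat{\mathbb{E}}_t[\xi](\omega)=\mathop{\mathrm{ess\,sup}}_{P'\in\mathcal{P}(t,P)}E_{P'}[\xi\mid\mathcal{F}_t](\omega)$ for $P$-a.e.\ $\omega$. Applying the \emph{conditional} classical BDG under each such $P'$ to the martingale $M$ started at time $t$, and then combining with the same deterministic sandwich for $\langle B\rangle$ under $P'$, one gets the conditional inequality pathwise in $\omega$; taking essential suprema over $P'\in\mathcal{P}(t,P)$ and then over $P\in\mathcal{P}$ produces the stated bound. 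Verifying the measurability and the exchangeability of the essential supremum with $c_p$ and $\underline{\sigma}^{p}$ (respectively $C_p$ and $\bar{\sigma}^{p}$) is a routine but careful exercise; modulo this, the proof is complete.
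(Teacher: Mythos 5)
This proposition is a cited preliminary in the paper: the authors give no proof beyond the remark that it follows from Proposition 2.10 in \cite{LP} together with the classical Burkholder--Davis--Gundy inequality, and your reconstruction---reducing to classical BDG for the continuous $P$-martingale $\int_t^{\cdot}\eta_s\,dB_s$ under each $P\in\mathcal{P}$, converting $d\langle B\rangle_s$ to $ds$ via the quasi-sure bounds $\underline{\sigma}^2\,ds\leq d\langle B\rangle_s\leq\bar{\sigma}^2\,ds$ coming from non-degeneracy, and then passing to the conditional statement through the essential-supremum representation of $\hat{\mathbb{E}}_t$ over the measures in $\mathcal{P}$ agreeing with $P$ up to time $t$---is precisely the standard argument that citation points to. Your proof is correct; the only nontrivial ingredient you import without proof is that conditional representation of $\hat{\mathbb{E}}_t$, which is indeed a known theorem in this framework, and your acknowledged measurability bookkeeping is routine.
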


Let $S_G^0(0,T)=\{h(t,B_{t_1\wedge t}, \ldots,B_{t_n\wedge t}):t_1,\ldots,t_n\in[0,T],h\in C_{b,Lip}(\mathbb{R}^{n+1})\}$. For $p\geq 1$ and $\eta\in S_G^0(0,T)$, set $\|\eta\|_{S_G^p}=\{\hat{\mathbb{E}}[\sup_{t\in[0,T]}|\eta_t|^p]\}^{1/p}$. Denote by $S_G^p(0,T)$ the completion of $S_G^0(0,T)$ under the norm $\|\cdot\|_{S_G^p}$.


We consider the following type of $G$-BSDEs:
\begin{equation}\label{eq1.1}
Y_t=\xi+\int_t^T f(s,Y_s,Z_s)ds+\int_t^T g(s,Y_s,Z_s)d\langle B\rangle_s-\int_t^T Z_s dB_s-(K_T-K_t),
\end{equation}
where
\begin{displaymath}
f(t,\omega,y,z),g(t,\omega,y,z):[0,T]\times\Omega_T\times\mathbb{R}\times\mathbb{R}\rightarrow \mathbb{R},
\end{displaymath}
satisfy the following properties:
\begin{description}
\item[(H1')] There exists some $\beta>1$ such that for any $y,z$, $f(\cdot,\cdot,y,z),g(\cdot,\cdot,y,z)\in M_G^{\beta}(0,T)$,
\item[(H2)] There exists some $L>0$ such that
\begin{displaymath}
|f(t,y,z)-f(t,y',z')|+|g(t,y,z)-g(t,y',z')|\leq L(|y-y'|+|z-z'|).
\end{displaymath}
\end{description}

For simplicity, we denote by $\mathfrak{S}_G^{\alpha}(0,T)$ the collection of process $(Y,Z,K)$ such that $Y\in S_G^{\alpha}(0,T)$, $Z\in H_G^{\alpha}(0,T)$, $K$ is a decreasing $G$-martingale with $K_0=0$ and $K_T\in L_G^{\alpha}(\Omega_T)$.

\begin{definition}
Let $\xi\in L_G^{\beta}(\Omega_T)$ and $f$, $g$ satisfies (H1') and (H2) for some $\beta>1$. A triplet of processes $(Y,Z,K)$ is called a solution of equation \eqref{eq1.1} if for some $1<\alpha\leq \beta$ the following properties hold:
\begin{description}
\item[(a)]$(Y,Z,K)\in\mathfrak{S}_G^{\alpha}(0,T)$;
\item[(b)]$Y_t=\xi+\int_t^T f(s,Y_s,Z_s)ds+\int_t^T g(s,Y_s,Z_s)d\langle B\rangle_s-\int_t^T Z_s dB_s-(K_T-K_t)$.
\end{description}
\end{definition}

\begin{theorem}[\cite{HJPS1}]\label{the1.4}
Assume that $\xi\in L_G^{\beta}(\Omega_T)$ and $f,g$ satisfy (H1') and (H2) for some $\beta>1$. Then equation \eqref{eq1.1} has a unique solution $(Y,Z,K)$. Moreover, for any $1<\alpha<\beta$, we have $Z\in H_G^{\alpha}(0,T)$, $K_T\in L_G^{\alpha}(\Omega_T)$ and
\begin{displaymath}
|Y_t|^\alpha\leq C\hat{\mathbb{E}}_t[|\xi|^\alpha+\int_t^T |f(s,0,0)|^\alpha+|g(s,0,0)|^\alpha ds],
\end{displaymath}
where the constant $C$ depends on $\alpha$, $T$, $\underline{\sigma}$ and $L$.
\end{theorem}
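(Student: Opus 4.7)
The plan is to establish the theorem in three stages: first derive an a priori estimate assuming a solution exists; second treat the decoupled case where $f$ and $g$ have no $(y,z)$ dependence via the martingale representation for $G$-martingales; third obtain the general case via a fixed-point argument in a weighted Banach space, which also delivers uniqueness.

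For the a priori bound on $|Y_t|^\alpha$, I would apply the $G$-It\^{o} formula to $(|Y_t|^2+\epsilon)^{\alpha/2}$ (letting $\epsilon\to 0$ to cover $1<\alpha<2$) and then take the conditional $G$-expectation $\hat{\mathbb{E}}_t$. Three observations do the work: (i) since $K$ is a decreasing $G$-martingale one has $\hat{\mathbb{E}}_t[-(K_T-K_t)]=0$, while the It\^{o} term $\alpha|Y_s|^{\alpha-1}\mathrm{sgn}(Y_s)\,dK_s$ has a favorable sign after appropriate dominating estimates; (ii) the Lipschitz hypothesis (H2) combined with Young's inequality lets us absorb the linear-in-$Z$ cross terms into the second-order It\^{o} correction $\frac{\alpha(\alpha-1)}{2}|Y_s|^{\alpha-2}Z_s^2 d\langle B\rangle_s$, using the non-degeneracy $d\langle B\rangle_s \ge \underline{\sigma}^2\,ds$; (iii) a Gronwall-type inequality in the sublinear framework then yields the stated estimate in terms of $|\xi|^\alpha$ and $\int_t^T(|f(s,0,0)|^\alpha+|g(s,0,0)|^\alpha)ds$. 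Applying the same estimate to the difference of two candidate solutions delivers uniqueness.

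For existence in the decoupled case, set $Y_t:=\hat{\mathbb{E}}_t[\xi+\int_t^T f(s)ds+\int_t^T g(s)d\langle B\rangle_s]$ and form the $G$-martingale $M_t:=Y_t+\int_0^t f(s)ds+\int_0^t g(s)d\langle B\rangle_s$; the $G$-martingale representation theorem produces $Z\in H_G^\alpha(0,T)$ and a decreasing $G$-martingale $K$ with $M_t=M_0+\int_0^t Z_s dB_s+K_t$, and rearranging yields \eqref{eq1.1}. For the general case, define $\Gamma:(y,z)\mapsto(Y,Z)$ by solving the decoupled equation with drivers $f(s,y_s,z_s)$ and $g(s,y_s,z_s)$; the a priori estimate applied to $\Gamma(y,z)-\Gamma(y',z')$ shows that $\Gamma$ is a contraction on $S_G^\alpha(0,T)\times H_G^\alpha(0,T)$ under a norm weighted by $e^{\lambda t}$ for large $\lambda$ (or equivalently on small time intervals followed by pasting).

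The main obstacle is that tools used freely in the classical setting (Fatou, dominated convergence, Gronwall) require careful sublinear substitutes, most notably Theorem \ref{the1.2} to pass between $\|\cdot\|_{L_G^p}$ and the $G$-evaluation norm $\|\cdot\|_{p,\mathcal{E}}$, and the Burkholder-Davis-Gundy bound of Proposition \ref{the1.3}. A subtle point is that the decreasing $G$-martingale $K$ need not sit in $L_G^\beta(\Omega_T)$ even when $\xi\in L_G^\beta(\Omega_T)$: controlling $K_T$ by expressing it through the other terms of \eqref{eq1.1} and applying the BDG inequality only works strictly below the exponent $\beta$, which explains the gap $\alpha<\beta$ in the integrability conclusions for $Z$ and $K_T$.
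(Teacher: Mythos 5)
The statement you are proving is imported from \cite{HJPS1}; the present paper gives no proof of it, so the relevant comparison is with the argument in that reference. Your a priori estimate and uniqueness steps are essentially the standard ones and are sound: apply It\^{o}'s formula to $(|Y_t|^2+\epsilon)^{\alpha/2}$, use that $\int_0^t\alpha\bar{Y}_s^{\alpha/2-1}Y_s^+\,dK_s$ is a $G$-martingale (Lemma 3.4 of \cite{HJPS1}), absorb the $Z$-cross terms into the quadratic-variation term via Young's inequality and $d\langle B\rangle_s\geq\underline{\sigma}^2\,ds$, and conclude by a Gronwall argument; this is exactly the computation the present paper reproduces in Lemma \ref{th2} and Remark \ref{th13}. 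Your treatment of the decoupled case via the $G$-martingale decomposition of Song \cite{S11} is also legitimate, and it correctly locates the source of the restriction $\alpha<\beta$: that decomposition only delivers $Z\in H_G^{\alpha}(0,T)$ and $K_T\in L_G^{\alpha}(\Omega_T)$ for $\alpha$ strictly below the integrability exponent of $M_T$.

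The genuine gap is the fixed-point step. In the $G$-framework the map $\Gamma$ is not a contraction on $S_G^{\alpha}(0,T)\times H_G^{\alpha}(0,T)$, for a reason you yourself half-acknowledge: the estimate for $\|\hat{Z}\|_{H_G^{\alpha}}$ unavoidably involves $\hat{\mathbb{E}}[\sup_{t}|\hat{Y}_t|^{\alpha}]$, because the term $\int_0^T\hat{Y}_s\,d\hat{K}_s$ (with $\hat{K}=K-K'$ of no definite monotonicity) can only be controlled by $\sup_t|\hat{Y}_t|\,(|K_T|+|K'_T|)$; and passing from the conditional bound $|\hat{Y}_t|^{\alpha}\leq C\hat{\mathbb{E}}_t[\cdots]$ to a bound on $\hat{\mathbb{E}}[\sup_t|\hat{Y}_t|^{\alpha}]$ costs integrability through Theorem \ref{the1.2}: the right-hand side must lie in $L_G^{\alpha+\delta}$ to control the $\alpha$-evaluation norm. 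Each Picard iterate therefore loses a fixed amount of integrability, and the scheme does not close in any single space $S_G^{\alpha}\times H_G^{\alpha}$; exponential weights $e^{\lambda t}$ or small-time pasting do not repair this, since the loss is in the exponent rather than in the constant. This is precisely why \cite{HJPS1} does not argue by contraction: they first construct the solution explicitly when $\xi=\varphi(B_{t_1},\dots,B_{t_N})$ and the generators have a simple form, by backward induction over the partition, representing $Y_t=u(t,\cdot,B_t)$ through solutions of fully nonlinear PDEs (nonlinear Feynman--Kac) and reading off $Z$ and $K$ from $u$; the general case then follows by approximating $(\xi,f,g)$ and passing to the limit with the a priori estimates. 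To complete your outline you would need to replace the contraction step by such an approximation scheme, or else justify convergence of the iteration despite the norm loss, which is not available.
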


\begin{theorem}[\cite{HJPS2}]\label{the1.5}
Let $(Y_t^l,Z_t^l,K_t^l)_{t\leq T}$, $l=1,2$, be the solutions of the following $G$-BSDEs:
\begin{displaymath}
Y^l_t=\xi^l+\int_t^T f^l(s,Y^l_s,Z^l_s)ds+\int_t^T g^l(s,Y^l_s,Z^l_s)d\langle B\rangle_s+V_T^l-V_t^l-\int_t^T Z^l_s dB_s-(K^l_T-K^l_t),
\end{displaymath}
where $\{V_t^l\}_{0\leq t\leq T}$ are RCLL processes such that $\hat{\mathbb{E}}[\sup_{t\in[0,T]}|V_t^l|^\beta]<\infty$, $f^l,\ g^l$ satisfy (H1') and (H2), $\xi^l\in L_G^{\beta}(\Omega_T)$ with $\beta>1$. If $\xi^1\geq \xi^2$, $f^1\geq f^2$, $g^1\geq g^2$, for $i,j=1,\cdots,d$, $V_t^1-V_t^2$ is an increasing process, then $Y_t^1\geq Y_t^2$.
\end{theorem}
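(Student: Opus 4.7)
The strategy adapts the comparison theorem for $G$-BSDEs in \cite{HJPS2} to accommodate the additional finite-variation drivers $V^l$, observing that $\hat{V}:=V^1-V^2$ contributes with the favorable sign since it is increasing by hypothesis. Set $\hat{Y}=Y^1-Y^2$, $\hat{Z}=Z^1-Z^2$, $\hat{K}=K^1-K^2$, $\hat{\xi}=\xi^1-\xi^2\geq 0$. By the Lipschitz property (H2), a standard interpolation produces measurable processes $a^f,b^f,a^g,b^g$ bounded by $L$ and non-negative residuals $\delta f_s := f^1(s,Y^2_s,Z^2_s)-f^2(s,Y^2_s,Z^2_s)\geq 0$ and $\delta g_s\geq 0$, so that $\hat{Y}$ satisfies the linearized $G$-BSDE
\begin{align*}
\hat{Y}_t &= \hat{\xi} + \int_t^T\bigl(a^f_s\hat{Y}_s+b^f_s\hat{Z}_s+\delta f_s\bigr)ds + \int_t^T\bigl(a^g_s\hat{Y}_s+b^g_s\hat{Z}_s+\delta g_s\bigr)d\langle B\rangle_s \\
&\quad + (\hat{V}_T-\hat{V}_t) - \int_t^T \hat{Z}_s\,dB_s - (\hat{K}_T-\hat{K}_t).
\end{align*}

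Next, apply the $G$-It\^o formula to $(\hat{Y}_t^-)^2$; since $\hat{\xi}\geq 0$ gives $(\hat{Y}_T^-)^2=0$, we obtain $(\hat{Y}_t^-)^2 + \int_t^T\mathbf{1}_{\{\hat{Y}_s<0\}}\hat{Z}_s^2\,d\langle B\rangle_s = 2\int_t^T\hat{Y}_s^-\,d\hat{Y}_s$. Substituting the linearized equation and using $\hat{Y}_s^-\hat{Y}_s=-(\hat{Y}_s^-)^2$, three simplifications occur: (i) the contributions $-2\hat{Y}_s^-\delta f_s$, $-2\hat{Y}_s^-\delta g_s\,d\langle B\rangle_s$, and $-2\hat{Y}_s^-\,d\hat{V}_s$ are all non-positive (the favorable sign comes precisely from $\xi^1\geq\xi^2$, $f^1\geq f^2$, $g^1\geq g^2$, and $\hat{V}$ increasing) and are discarded; (ii) the linear-in-$\hat{Y}$ pieces contribute at most $2L(\hat{Y}_s^-)^2$, controllable by Gronwall; (iii) the $\hat{Z}$-cross terms $-2\hat{Y}_s^-b^{f,g}_s\hat{Z}_s$ are absorbed into $\int\mathbf{1}_{\{\hat{Y}<0\}}\hat{Z}^2\,d\langle B\rangle$ on the left via Young's inequality together with the non-degeneracy $d\langle B\rangle\geq\underline{\sigma}^2\,dt$. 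The stochastic integral $2\int\hat{Y}_s^-\hat{Z}_s\,dB_s$ is a symmetric $G$-It\^o integral whose $\hat{\mathbb{E}}_t$-expectation vanishes, the required integrability being furnished by Proposition~\ref{the1.3}.

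The \textbf{main obstacle} is the non-monotonic $G$-martingale contribution $2\int_t^T\hat{Y}_s^-\,d\hat{K}_s = 2\int_t^T\hat{Y}_s^-\,dK^1_s - 2\int_t^T\hat{Y}_s^-\,dK^2_s$, since $\hat{K}$ is the difference of two decreasing $G$-martingales and is not a monotone process. The first integral is $\leq 0$ (as $K^1$ is decreasing) and is dropped, but $-2\int_t^T\hat{Y}_s^-\,dK^2_s\geq 0$ is unfavorable and must be controlled. The plan is to estimate it by $2\bigl(\sup_{s\in[t,T]}\hat{Y}_s^-\bigr)(K^2_t-K^2_T)$ and then apply Young's inequality, invoking the a priori integrability of $K^2_T$ provided by Theorem~\ref{the1.4}. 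Taking $\sup_t$ and then $\hat{\mathbb{E}}$, and choosing the Young parameter small enough so that the resulting $\hat{\mathbb{E}}[\sup_s(\hat{Y}_s^-)^2]$ term is absorbed back to the left side, one arrives at $\hat{\mathbb{E}}[\sup_t(\hat{Y}_t^-)^2]\leq C\int_0^T\hat{\mathbb{E}}[(\hat{Y}_s^-)^2]\,ds$. Gronwall's inequality, iterated over sufficiently small time-subintervals if necessary, then forces $(\hat{Y}_t^-)^2=0$ quasi-surely, i.e.\ $Y^1_t\geq Y^2_t$.
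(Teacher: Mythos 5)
This theorem is quoted from \cite{HJPS2}; the paper gives no proof of it, and the closest in-house argument is the variant comparison theorem of Section~3, which proceeds by linearization followed by an \emph{explicit representation} of the linearized equation in the auxiliary extended $\tilde{G}$-expectation space (multiplying by the positive adjoint process $X_t$ and exploiting that $\int_0^\cdot X_s\,dK_s$ remains a $G$-martingale when $X\geq 0$). Your proposal instead runs the classical $L^2$-energy argument on $(\hat{Y}^-)^2$, and it breaks at exactly the point you flag as the ``main obstacle.''

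Concretely: the term $-2\int_t^T\hat{Y}_s^-\,dK_s^2$ is nonnegative and, since $\hat{Y}^-\geq 0$ and $K^2$ is a decreasing $G$-martingale, Lemma~3.4 of \cite{HJPS1} only tells you that $\int_0^\cdot\hat{Y}_s^-\,dK_s^2$ is again a \emph{decreasing} $G$-martingale; its negative has strictly positive conditional $\hat{\mathbb{E}}_t$-expectation in general (this asymmetry is the whole point of non-symmetric $G$-martingales), so it cannot be discarded. Your proposed remedy --- bound it by $2\bigl(\sup_s\hat{Y}_s^-\bigr)\,|K^2_T-K^2_t|$ and apply Young's inequality --- absorbs the $\varepsilon\,\hat{\mathbb{E}}[\sup_s(\hat{Y}_s^-)^2]$ piece but leaves the fixed, non-vanishing constant $\varepsilon^{-1}\hat{\mathbb{E}}[\,|K^2_T-K^2_t|^2]$ on the right-hand side. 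Gronwall then yields only $\hat{\mathbb{E}}[\sup_t(\hat{Y}_t^-)^2]\leq \varepsilon^{-1}C_K\,e^{CT}/(1-\varepsilon)$, a finite positive bound that does not force $\hat{Y}^-\equiv 0$; iterating over small subintervals does not help, because $\hat{\mathbb{E}}[|K^2|^2]$ restricted to a subinterval need not be small, and even if it were, the pieces sum back to the same constant rather than to zero. (If this scheme worked, the comparison theorem for $G$-BSDEs would be an easy corollary of the classical proof, which it is not.) A secondary issue is that $x\mapsto(x^-)^2$ is only $C^{1,1}$, so the $G$-It\^{o} formula requires an additional smoothing argument, but that is repairable. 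The essential missing idea is the one used in \cite{HJPS2} and in Section~3 here: never square the negative part; instead write $\hat{Y}$ as the solution of a linear $G$-BSDE with nonnegative source terms (up to an $O(\varepsilon)$ error), solve that linear equation explicitly via the extended $\tilde{G}$-expectation, and use the sign of the weighted $G$-martingale $\int X_s\,dK_s$ there, where it enters with the favorable orientation.
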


\section{A variant comparison theorem}
In this section, we introduce a variant comparison theorem for solutions to $G$-BSDEs. First, we state some basic properties as preliminaries.

\begin{lemma}\label{th10}
Let $X_t\in S^{\alpha}_G(0,T)$, where $\alpha>1$. Set $X_t^n=\sum_{i=0}^{n-1}X_{t_i^n}I_{[t_i^n,t_{i+1}^n)}(t)$, where $t_i^n=\frac{iT}{n}$, $i=0,\cdots,n$,
 $1/\alpha+1/\alpha^*=1$. Suppose that $K$ is a $G$-submartingale with finite variation satisfying $K_0=0$ and $\hat{\mathbb{E}}[|Var(K)|^{\alpha*}]<\infty$, where $Var(K)$ is the total variation of $K$ on $[0,T]$, then
\begin{displaymath}
\hat{\mathbb{E}}[\sup_{t\in[0,T]}|\int_0^t(X_s^n-X_s)dK_s|]\rightarrow0.
\end{displaymath}
\end{lemma}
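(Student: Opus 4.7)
The plan is to bound the integral pathwise, apply Hölder's inequality in the sublinear framework, and reduce everything to the statement that piecewise constant approximations converge in $S_G^\alpha$ norm. Since $K$ has finite variation paths quasi-surely,
\[
\sup_{t\in[0,T]}\left|\int_0^t (X^n_s - X_s)\,dK_s\right| \leq \left(\sup_{s\in[0,T]}|X^n_s - X_s|\right)\cdot Var(K) \quad \text{q.s.}
\]
Taking $\hat{\mathbb{E}}$ and using Hölder's inequality with exponents $\alpha$ and $\alpha^*$ (valid under any sublinear expectation by sub-additivity and positive homogeneity),
\[
\hat{\mathbb{E}}\left[\sup_{t\in[0,T]}\left|\int_0^t(X^n_s-X_s)\,dK_s\right|\right] \leq \|X^n - X\|_{S_G^\alpha}\cdot\bigl(\hat{\mathbb{E}}[|Var(K)|^{\alpha^*}]\bigr)^{1/\alpha^*}.
\]
The second factor is finite by assumption, so the submartingale property of $K$ is not really used here; everything reduces to proving $\|X^n - X\|_{S_G^\alpha}\to 0$ for an arbitrary $X\in S_G^\alpha(0,T)$.

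For this uniform-approximation statement I would use a three-$\varepsilon$ argument based on the density of $S_G^0$ in $S_G^\alpha$. Given $\varepsilon>0$, choose $\tilde X\in S_G^0(0,T)$ with $\|X-\tilde X\|_{S_G^\alpha}<\varepsilon$. The key observation is that the discretization operator $Y\mapsto Y^n$ is non-expansive on $S_G^\alpha$, since
\[
\sup_{t\in[0,T]}|X^n_t - \tilde X^n_t| = \sup_{0\leq i\leq n-1}|X_{t_i^n}-\tilde X_{t_i^n}|\leq \sup_{t\in[0,T]}|X_t-\tilde X_t|,
\]
so $\|X^n-\tilde X^n\|_{S_G^\alpha}\leq \|X-\tilde X\|_{S_G^\alpha}$. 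By the triangle inequality,
\[
\|X^n-X\|_{S_G^\alpha}\leq 2\varepsilon + \|\tilde X^n - \tilde X\|_{S_G^\alpha},
\]
and it remains to send $n\to\infty$ for fixed $\tilde X\in S_G^0$.

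The main obstacle, and the real analytic content, is to show $\|\tilde X^n-\tilde X\|_{S_G^\alpha}\to 0$ when $\tilde X_t = h(t,B_{t_1\wedge t},\dots,B_{t_m\wedge t})$ with $h\in C_{b,Lip}$. Let $L$ be the Lipschitz constant of $h$. For $t\in[t_j^n,t_{j+1}^n)$ one has
\[
|\tilde X_t - \tilde X_{t_j^n}| \leq L\Bigl(\tfrac{T}{n} + \sum_{i=1}^m |B_{t_i\wedge t}-B_{t_i\wedge t_j^n}|\Bigr) \leq L\Bigl(\tfrac{T}{n}+ m\sup_{|s-r|\leq T/n}|B_s-B_r|\Bigr),
\]
so this reduces to the fact that the modulus of continuity $\sup_{|s-r|\leq T/n}|B_s-B_r|$ tends to zero in $L_G^\alpha$-norm as $n\to\infty$. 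This is a standard Kolmogorov-type estimate for $G$-Brownian motion, since $\hat{\mathbb{E}}[|B_s-B_r|^p]=\hat{\mathbb{E}}[|B_1|^p]|s-r|^{p/2}$ for all $p\geq 1$ and $\sup_{t\in[0,T]}|B_t|$ has moments of every order under $\hat{\mathbb{E}}$. Combining the three-$\varepsilon$ argument with this modulus estimate delivers $\|X^n-X\|_{S_G^\alpha}\to 0$, and the Hölder bound above then yields the lemma.
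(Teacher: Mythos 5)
Your argument is correct and follows the same route as the paper: bound the Stieltjes integral pathwise by $\sup_{t}|X^n_t-X_t|\cdot Var(K)$, apply H\"older under $\hat{\mathbb{E}}$, and reduce to the convergence $\|\sup_{t}|X^n_t-X_t|\|_{L_G^\alpha}\to 0$. The only difference is that the paper simply cites Lemma 3.2 of Hu--Ji--Peng--Song for this last approximation fact, whereas you reprove it via density of $S_G^0$, non-expansiveness of the discretization, and the modulus of continuity of $G$-Brownian motion; this is correct but not needed.
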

\begin{proof}
It is easy to check that
\begin{displaymath}
\sup_{t\in[0,T]}|\int_0^t(X_s^n-X_s)dK_s|\leq \sup_{t\in[0,T]}|X_t^n-X_t||Var(K)|.
\end{displaymath}
By applying Lemma 3.2 in \cite{HJPS1}, we have
\begin{displaymath}\hat{\mathbb{E}}[\sup_{t\in[0,T]}|\int_0^t(X_s^n-X_s)dK_s|]\leq \|\sup_{t\in[0,T]}|X_t^n-X_t|\|_{L^\alpha_G}\|Var(K)\|_{L^{\alpha^*}_G}\rightarrow 0.
\end{displaymath}
\end{proof}

\begin{lemma}\label{th11}
Let $X_t\in S^{\alpha}_G(0,T)$, where $\alpha>1$,
$1/\alpha+1/\alpha^*=1$. Suppose that $K^j$ is a $G$-submartingale with finite variation satisfying $K^j_0=0$ and $\hat{\mathbb{E}}[|Var(K^j)|^{\alpha*}]<\infty$, $j=1,2$, then
\begin{displaymath}
\int_0^t X_s^+ dK_s^1+\int_0^t X_s^- dK_s^2,
\end{displaymath}
is a $G$-submartingale.
\end{lemma}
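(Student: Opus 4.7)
The plan is to approximate $X$ by step processes, verify the submartingale property at the discrete level (where positive homogeneity of conditional $G$-expectation applies directly), and then pass to the limit via Lemma~\ref{th10}. I would first check that both $X^+$ and $X^-$ lie in $S_G^\alpha(0,T)$: the maps $x\mapsto x^\pm$ are $1$-Lipschitz, so applying them to approximating elements of $S_G^0(0,T)$ yields elements of $S_G^0(0,T)$ converging in the $S_G^\alpha$-norm. In particular, writing $X^n_t=\sum_i X_{t_i^n}I_{[t_i^n,t_{i+1}^n)}(t)$ for the dyadic step approximation from Lemma~\ref{th10}, the processes $(X^n)^\pm$ are precisely the corresponding step approximations of $X^\pm$.

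I would then set
\[
M^n_t:=\int_0^t(X^n_s)^+\,dK_s^1+\int_0^t(X^n_s)^-\,dK_s^2=\sum_{i=0}^{n-1}\Bigl[X_{t_i^n}^+\bigl(K^1_{t_{i+1}^n\wedge t}-K^1_{t_i^n\wedge t}\bigr)+X_{t_i^n}^-\bigl(K^2_{t_{i+1}^n\wedge t}-K^2_{t_i^n\wedge t}\bigr)\Bigr]
\]
and show that $M^n$ is a $G$-submartingale. Fixing $s\leq t$, I would decompose $M^n_t-M^n_s$ across the partition, obtaining a sum of terms of the form $X_{t_i^n}^\pm(K^j_v-K^j_u)$ with $s\leq u\leq v\leq t$ and either $t_i^n\leq s$ (so $X_{t_i^n}^\pm$ is already $\mathcal{F}_s$-measurable) or $t_i^n=u$ (so after a tower-property step $X_{t_i^n}^\pm$ becomes $\mathcal{F}_u$-measurable). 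In both cases, the nonnegativity of $X_{t_i^n}^\pm$ allows pulling the factor out of the conditional $G$-expectation by positive homogeneity, and the $G$-submartingale property of $K^j$ then yields $\hat{\mathbb{E}}_u[K^j_v-K^j_u]\geq 0$. A backward induction over the partition then gives $\hat{\mathbb{E}}_s[M^n_t]\geq M^n_s$ q.s.

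For the limit passage, Lemma~\ref{th10} applied to the pairs $(X^+,K^1)$ and $(X^-,K^2)$ yields $\hat{\mathbb{E}}[\sup_{t\in[0,T]}|M^n_t-M_t|]\to 0$, where $M_t$ denotes the process in the statement. Combined with the $L^1$-contraction $\hat{\mathbb{E}}[|\hat{\mathbb{E}}_s[M^n_t]-\hat{\mathbb{E}}_s[M_t]|]\leq \hat{\mathbb{E}}[|M^n_t-M_t|]$, one can extract a subsequence along which $\hat{\mathbb{E}}_s[M^{n_k}_t]\to \hat{\mathbb{E}}_s[M_t]$ and $M^{n_k}_s\to M_s$ both hold q.s., and the inequality $\hat{\mathbb{E}}_s[M^{n_k}_t]\geq M^{n_k}_s$ passes to the limit. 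The main obstacle I anticipate is the discrete step: pulling $X_{t_i^n}^\pm$ out of $\hat{\mathbb{E}}_{t_i^n}[\cdot]$ relies crucially on nonnegativity, since conditional $G$-expectation is only positively homogeneous, not linear. This is precisely why splitting $X=X^+-X^-$ and pairing each piece with its own $G$-submartingale is essential; combining them into a single integrand against a single $K$ would not, in general, preserve the submartingale property.
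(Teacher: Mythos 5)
Your proposal is correct and follows essentially the same route as the paper: approximate $X$ by the step processes of Lemma \ref{th10}, establish the submartingale property at the discrete level by pulling the nonnegative factors $X^{\pm}_{t_i^n}$ out of the conditional $G$-expectation and invoking the $G$-submartingale property of $K^1,K^2$, then pass to the limit. The paper's proof is a terser version of exactly this argument, writing out only the one-step inequality and leaving the induction and the limit passage implicit.
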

\begin{proof}
It suffices to prove that the process $\int_0^t (X_s^n)^+ dK_s^1+\int_0^t (X_s^n)^- dK_s^2$ is a $G$-submartingale, where $X^n$ is the same as Lemma \ref{th10}. Then for any
$t\in[t^n_i,t^n_{i+1})$,
\begin{displaymath}
\begin{split}
&\hat{\mathbb{E}}_t[X^+_{t_i^n}(K^1_{t_{i+1}^n}-K^1_{t_i^n})+X^-_{t_i^n}(K^2_{t_{i+1}^n}-K^2_{t_i^n})]\\
&=X^+_{t_i^n}\hat{\mathbb{E}}_t[(K^1_{t_{i+1}^n}-K^1_{t_i^n})]+X^-_{t_i^n}\hat{\mathbb{E}}_t[(K^2_{t_{i+1}^n}-K^2_{t_i^n})]\\
&\geq X^+_{t_i^n}(K^1_t-K^1_{t_i^n})+X^-_{t_i^n}(K^2_t-K^2_{t_i^n}).
\end{split}
\end{displaymath}
From this we have the desired result.
\end{proof}

Consider the following equation
\begin{equation}\label{4}
Y_t=\xi+\int_t^T f_s ds+\int_t^T g_s d\langle B\rangle_s-\int_t^T Z_s dB_s-(K_T-K_t),
\end{equation}
where $f_s=a_sY_s+b_sZ_s+m_s$, $g_s=c_sY_s+d_sZ_s+n_s$, $K$ is given such that it is a $G$-submartingale with finite variation satisfying $K_0=0$. It is worth pointing out that there may not exist a pair $(Y,Z)$ satisfying \eqref{4}. If there does exist a solution of equation \eqref{4}, to solve this problem, first we construct an auxiliary extended $\tilde{G}$-expectation space $(\tilde{\Omega}_T,L_{\tilde{G}}^1(\tilde{\Omega}_T),\hat{\mathbb{E}}^{\tilde{G}})$ with $\tilde{\Omega}_T=C_0([0,T],\mathbb{R}^2)$ and

\begin{displaymath}
\tilde{G}(A)=\frac{1}{2}\sup_{\underline{\sigma}^2\leq v\leq \overline{\sigma}^2} tr\Big[A \begin{bmatrix}v & 1 \\ 1 &v^{-1}\end{bmatrix}\Big],
A\in\mathbb{S}^2.
\end{displaymath}
Let $\{(B_t,\tilde{B}_t)\}$ be the canonical process in the extended space.

\begin{remark}
It is easy to check that $\langle B,\tilde{B}\rangle_t=t$. In particular, if $\underline{\sigma}^2=\overline{\sigma}^2$, we can further get $\tilde{B}_t=\overline{\sigma}^{-2}B_t$.
\end{remark}

Now we consider the following $\tilde{G}$-SDE:
\begin{displaymath}
X_t=1+\int_0^t a_sX_s ds+\int_0^t c_sX_s d\langle B\rangle_s+\int_0^t d_sX_s dB_s
+\int_0^t b_sX_s d\tilde{B}_s.
\end{displaymath}
We may solve it explicitly and get that
\[X_t=\exp(\int_0^t (a_s-b_sd_s)ds+\int_0^t c_sd\langle B\rangle_s)\mathcal{E}_t^B\mathcal{E}_t^{\tilde{B}},\]
where $\mathcal{E}_t^B=\exp(\int_0^t d_s dB_s-\frac{1}{2}\int_0^t d_s^2d\langle B\rangle_s)$, $\mathcal{E}_t^{\tilde{B}}=\exp(\int_0^t b_s d\tilde{B}_s-\frac{1}{2}\int_0^t b_s^2d\langle \tilde{B}\rangle_s)$. Then applying $G$-It\^{o}'s formula to $X_tY_t$, we derive that
\begin{displaymath}
\begin{split}
&X_tY_t+\int_t^T(X_sZ_s+d_sX_sY_s)dB_s+\int_t^T b_sX_sY_s d\tilde{B}_s+\int_t^T X_sdK_s\\
&=X_T\xi+\int_t^T m_sX_s ds+\int_t^T n_sX_s d\langle B\rangle_s.
\end{split}
\end{displaymath}
From Lemma \ref{th11}, we have $\hat{\mathbb{E}}^{\tilde{G}}_t[\int_t^T X_sdK_s]\geq 0$. Taking $\hat{\mathbb{E}}_t^{\tilde{G}}$ conditional expectations on both sides of the above equality, it follows that
\begin{displaymath}
Y_t\leq (X_t)^{-1}\hat{\mathbb{E}}^{\tilde{G}}_t[X_T\xi+\int_t^T m_sX_s ds+\int_t^T n_sX_s d\langle B\rangle_s].
\end{displaymath}

Consider the following equation
\begin{displaymath}
Y_t=K_T+\int_t^T (a_sY_s+b_sZ_s-a_sK_s) ds+\int_t^T (c_sY_s+d_sZ_s-c_sK_s) d\langle B\rangle_s-\int_t^T Z_s dB_s-(K_T-K_t).
\end{displaymath}
 Here, $K$ is a given $G$-submartingale with finite variation satisfying $K_0=0$. It is easy to check that $Y_t=K_t$, $Z_t=0$ is the solution of the above equation. Applying the analysis above, we have
\begin{equation}\label{eq5}
K_t=Y_t\leq (X_t)^{-1}\hat{\mathbb{E}}^{\tilde{G}}_t[X_T K_T-\int_t^T a_sK_sX_s ds-\int_t^T c_sK_sX_s d\langle B\rangle_s].
\end{equation}

\begin{remark}If $K$ in \eqref{4} is a decreasing $G$-martingale, then the two sides of the above inequalities are equal. \end{remark}

\begin{theorem}\label{th12}
Assume that $\xi^i\in L_G^\beta(\Omega_T)$, $f_i,g_i$ satisfy (H1') and (H2) in Section 2 with $\beta>1$, $i=1,2$. Let $(Y^2_t,Z^2_t,K^2_t)$ be the solution of $G$-BSDE with generators $f_2,g_2$ and terminal value $\xi^2$, $(Y_t^1,Z_t^1)$ satisfy the following equation
\begin{displaymath}
Y^1_t=\xi^1+\int_t^T f_1(s,Y^1_s,Z^1_s)ds+\int_t^T g_1(s,Y^1_s,Z^1_s)d\langle B\rangle_s
-\int_t^T Z_s^1 dB_s-(K^1_T-K^1_t),
\end{displaymath}
where $K^1$ is a $G$-submartingale with finite variation satisfying $K_0=0$ and $\hat{\mathbb{E}}[|Var(K^1)|^\beta]<\infty$. If $\xi^1\leq \xi^2$, $f_1\leq f_2$, $g_1\leq g_2$, then $Y_t^1\leq Y_t^2$.
\end{theorem}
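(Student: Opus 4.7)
The plan is to linearize the BSDE for the difference and then invoke the upper-bound representation formula (equation (3.2) in the excerpt) that was just established for linear equations of type (4). Set $\hat{Y}_t:=Y_t^1-Y_t^2$, $\hat{Z}_t:=Z_t^1-Z_t^2$, $\hat{K}_t:=K_t^1-K_t^2$ and $\hat{\xi}:=\xi^1-\xi^2$. Subtracting the two equations gives
\begin{equation*}
\hat{Y}_t=\hat{\xi}+\int_t^T\!\bigl[f_1(s,Y^1_s,Z^1_s)-f_2(s,Y^2_s,Z^2_s)\bigr]ds+\int_t^T\!\bigl[g_1(s,Y^1_s,Z^1_s)-g_2(s,Y^2_s,Z^2_s)\bigr]d\langle B\rangle_s-\int_t^T\!\hat{Z}_sdB_s-(\hat{K}_T-\hat{K}_t).
\end{equation*}
By the standard linearization device (using (H2)), one writes $f_1(s,Y^1_s,Z^1_s)-f_1(s,Y^2_s,Z^2_s)=a_s\hat{Y}_s+b_s\hat{Z}_s$ and $g_1(s,Y^1_s,Z^1_s)-g_1(s,Y^2_s,Z^2_s)=c_s\hat{Y}_s+d_s\hat{Z}_s$ with bounded, progressively measurable $a,b,c,d$ (bounds given by the Lipschitz constant $L$), and sets the forcing terms $m_s:=f_1(s,Y^2_s,Z^2_s)-f_2(s,Y^2_s,Z^2_s)\leq 0$ and $n_s:=g_1(s,Y^2_s,Z^2_s)-g_2(s,Y^2_s,Z^2_s)\leq 0$. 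Thus $(\hat{Y},\hat{Z})$ solves the linear equation \eqref{4} with terminal value $\hat{\xi}$, coefficients $(a,b,c,d,m,n)$, and finite-variation process $\hat{K}$.

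Next I would verify that $\hat{K}$ satisfies the hypotheses under which the estimate \eqref{eq5}-type bound derived before the theorem is valid, i.e.\ that $\hat{K}$ is a $G$-submartingale with finite variation and $\hat{\mathbb{E}}[|Var(\hat{K})|^\beta]<\infty$. Since $K^2$ is a decreasing $G$-martingale, $-K^2$ is an increasing process (hence trivially a $G$-submartingale). For any $s\leq t$, monotonicity of $\hat{\mathbb{E}}_s$ together with the extractability $\hat{\mathbb{E}}_s[(-K^2_s)+K^1_t]=-K^2_s+\hat{\mathbb{E}}_s[K^1_t]$ yields
\begin{equation*}
\hat{\mathbb{E}}_s[\hat{K}_t]=\hat{\mathbb{E}}_s[K^1_t+(-K^2_t)]\geq\hat{\mathbb{E}}_s[K^1_t+(-K^2_s)]=\hat{\mathbb{E}}_s[K^1_t]+(-K^2_s)\geq K^1_s-K^2_s=\hat{K}_s,
\end{equation*}
so $\hat{K}$ is a $G$-submartingale. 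The bound $|Var(\hat{K})|\leq|Var(K^1)|+(K^2_0-K^2_T)$ together with the hypothesis on $K^1$ and Theorem \ref{the1.4} applied to $K^2$ gives the required $\beta$-moment.

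The conclusion is immediate from the displayed upper bound preceding the theorem statement: setting up the $\tilde{G}$-SDE for $X$ with the above coefficients $a,b,c,d$ and noting $X_t>0$ (it is an explicit exponential), the analysis carried out for equation \eqref{4} yields
\begin{equation*}
\hat{Y}_t\leq(X_t)^{-1}\hat{\mathbb{E}}^{\tilde{G}}_t\!\Bigl[X_T\hat{\xi}+\int_t^T m_sX_sds+\int_t^T n_sX_sd\langle B\rangle_s\Bigr].
\end{equation*}
The right-hand side is nonpositive by the three sign conditions $\hat{\xi}\leq 0$, $m_s\leq 0$, $n_s\leq 0$ and the positivity of $X$, so $\hat{Y}_t\leq 0$ and the theorem follows. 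The main obstacle is the verification in the middle paragraph: one must be sure that the difference $\hat{K}$ still fits the abstract framework (sub-martingality with the right integrability) under which Lemma \ref{th11} forces $\hat{\mathbb{E}}^{\tilde{G}}_t[\int_t^T X_sd\hat{K}_s]\geq 0$; everything else is standard linearization together with direct application of formula \eqref{eq5}.
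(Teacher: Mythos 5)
Your overall strategy is sound and in fact reorganizes the paper's argument in a cleaner way. The paper works with $\hat{Y}=Y^2-Y^1$, moves $K^1$ to the left-hand side so that the remaining orthogonal term is the genuine decreasing $G$-martingale $K^2$ (for which the representation is an equality), and then has to invoke the auxiliary inequality \eqref{eq5} — obtained by observing that $(Y,Z)=(K^1,0)$ solves a particular linear equation — to cancel the $K^1$-terms. You instead keep $Y^1-Y^2$ and absorb everything into the single finite-variation process $\hat{K}=K^1-K^2$, verifying directly that $\hat{K}$ is a $G$-submartingale via monotonicity and translation invariance of $\hat{\mathbb{E}}_s[\cdot]$ (your chain of inequalities is correct; note that sub-additivity alone would go the wrong way, so the pathwise monotonicity of $-K^2$ is essential and you use it correctly). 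This lets you apply the upper-bound representation for equation \eqref{4} once, with no auxiliary identity, and conclude from the sign conditions $\hat{\xi}\leq 0$, $m\leq 0$, $n\leq 0$ and $X>0$. The variation bound $|Var(\hat{K})|\leq |Var(K^1)|+|K^2_T|$ is also fine.

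There is, however, one genuine gap: the ``standard linearization device.'' In the $G$-framework you cannot simply write $f_1(s,Y^1_s,Z^1_s)-f_1(s,Y^2_s,Z^2_s)=a_s\hat{Y}_s+b_s\hat{Z}_s$ with the difference-quotient coefficients, because boundedness and progressive measurability are not enough here: the coefficients must belong to $M_G^2(0,T)$ for the auxiliary $\tilde{G}$-SDE defining $X$ (and the whole It\^{o} calculus on the extended space) to make sense, and the raw quotient $(f_1(s,Y^1_s,Z^1_s)-f_1(s,Y^1_s,Z^2_s))(\hat{Z}_s)^{-1}\mathbf{1}_{\{\hat{Z}_s\neq 0\}}$ need not lie in that space. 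This is precisely why the paper (following the proof of Theorem 3.6 in \cite{HJPS2}, and as done explicitly in the proof of Lemma \ref{th3} with the cutoff function $l$) uses only an approximate linearization $\hat{f}_s=a^\varepsilon_s\hat{Y}_s+b^\varepsilon_s\hat{Z}_s+m_s-m^\varepsilon_s$ with $a^\varepsilon,b^\varepsilon\in M_G^2(0,T)$, $|a^\varepsilon|,|b^\varepsilon|\leq L$ and an error $|m^\varepsilon_s|\leq 4L\varepsilon$. Your argument goes through verbatim with this replacement — the conclusion becomes $\hat{Y}_t\leq C\varepsilon$ for a constant $C$ controlling $\hat{\mathbb{E}}^{\tilde{G}}_t[\int_t^T|X^\varepsilon_s|ds+\int_t^T|X^\varepsilon_s|d\langle B\rangle_s]$ uniformly in $\varepsilon$, and one lets $\varepsilon\rightarrow 0$ — but without this modification the step ``sets the coefficients $a,b,c,d$'' is not justified in this framework.
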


\begin{proof}
Let $\hat{Y}_t=Y_t^2-Y_t^1$, $\hat{Z}_t=Z_t^2-Z^1_t$, $\hat{f}_s=f_2(s,Y_s^2,Z_s^2)-f_1(s,Y_s^1,Z_s^1)$,
$\hat{g}_s=g_2(s,Y_s^2,Z_s^2)-g_1(s,Y_s^1,Z_s^1)$, $\hat{\xi}=\xi^2-\xi^1$. Then we have
\begin{equation}\label{eq6}
\hat{Y}_t+K_t^1=\hat{\xi}+K_T^1+\int_t^T \hat{f}_s ds +\int_t^T \hat{g}_s d\langle B\rangle_s
-\int_t^T \hat{Z}_s dB_s-(K_T^2-K_t^2).
\end{equation}

For each fixed $\varepsilon>0$, by the proof of Theorem 3.6 in \cite{HJPS2}, we can get 
\begin{displaymath}
\hat{f}_s=a^\varepsilon_s\hat{Y}_s+b^\varepsilon_s\hat{Z}_s+m_s-m_s^\varepsilon,
\hat{g}_s=c^\varepsilon_s\hat{Y}_s+d^\varepsilon_s\hat{Z}_s+n_s-n_s^\varepsilon,
\end{displaymath}
where $|m^\varepsilon_s|\leq 4L\varepsilon$, $|n^\varepsilon_s|\leq 4L\varepsilon$,
$m_s=f_2(s,Y_s^1,Z_s^1)-f_1(s,Y_s^1,Z_s^1)\geq 0$, $n_s=g_2(s,Y_s^1,Z_s^1)-g_1(s,Y_s^1,Z_s^1)\geq 0$, $\psi^\varepsilon\in M_G^2(0,T)$ and $|\psi^\varepsilon|\leq L$ for $\psi=a$, $b$, $c$, $d$.

Recalling \eqref{eq5}, we can solve \eqref{eq6} to get
\begin{displaymath}
\begin{split}%
\hat{Y}_t+K_t^1
=&(X_t^\varepsilon)^{-1}\hat{\mathbb{E}}_t^{\tilde{G}}[X_T^\varepsilon(\hat{\xi}+K_T^1)
+\int_t^T(\widetilde{m}_s^\varepsilon-a_s^\varepsilon K_s^1)X_s^\varepsilon ds
+\int_t^T(\widetilde{n}_s^\varepsilon-c_s^\varepsilon K_s^1)X_s^\varepsilon d\langle B\rangle_s]\\
\geq&(X_t^\varepsilon)^{-1}\hat{\mathbb{E}}_t^{\tilde{G}}[X_T^\varepsilon K_T^1
+\int_t^T(-m_s^\varepsilon-a_s^\varepsilon K_s^1)X_s^\varepsilon ds
+\int_t^T(-n_s^\varepsilon-c_s^\varepsilon K_s^1)X_s^\varepsilon d\langle B\rangle_s]\\
\geq &(X_t^\varepsilon)^{-1}\hat{\mathbb{E}}_t^{\tilde{G}}[X_T^\varepsilon K_T^1
-\int_t^T a_s^\varepsilon K_s^1X_s^\varepsilon ds
-\int_t^T c_s^\varepsilon K_s^1X_s^\varepsilon d\langle B\rangle_s]\\
&-(X_t^\varepsilon)^{-1}\hat{\mathbb{E}}_t^{\tilde{G}}[\int_t^T m_s^\varepsilon X_s^\varepsilon ds+
\int_t^T n_s^\varepsilon X_s^\varepsilon d\langle B\rangle_s]\\
\geq &K_t^1-4L\varepsilon (X_t^\varepsilon)^{-1}\hat{\mathbb{E}}_t^{\tilde{G}}[\int_t^T|X_s^\varepsilon|ds+
\int_t^T |X_s^\varepsilon| d\langle B\rangle_s],
\end{split}
\end{displaymath}
where $\widetilde{m}_s^\varepsilon=m_s-m_s^\varepsilon$, $\widetilde{n}_s^\varepsilon=n_s-n_s^\varepsilon$, $\{X_t^\varepsilon\}_{t\in[0,T]}$ is the solution of the following equation
\begin{displaymath}
X_t^\varepsilon=1+\int_0^t a_s^\varepsilon X^\varepsilon_s ds+\int_0^t c^\varepsilon_sX_s^\varepsilon d\langle B\rangle_s+
\int_0^t d_s^\varepsilon X^\varepsilon_s dB_s
+\int_0^t b_s^\varepsilon X_s^\varepsilon d\tilde{B}_s.
\end{displaymath}
Then by letting $\varepsilon\rightarrow 0$, we can derive the desired result.
\end{proof}

\section{Reflected $G$-BSDE with an upper obstacle}
El Karoui, Kapoudjian, Pardoux, Peng and Quenez \cite{KKPPQ} introduced the reflected BSDE with a lower obstacle. An additional increasing process should be added in this equation to keep the solution be above the given obstacle. Substitute a decreasing process for the increasing one, we can use the same method to deal with the reflected BSDE with an upper obstacle.  However, under the $G$-framework, due to the appearance of the decreasing $G$-martingale in the penalized $G$-BSDEs, these two cases are significantly different. Now we reformulate this problems as follows.

We are given these parameters: the generator $f$ and $g$, the obstacle process $\{S_t\}_{t\in[0,T]}$ and the terminal value $\xi$, where $f$ and $g$ are maps
\begin{displaymath}
f(t,\omega,y,z),g(t,\omega,y,z):[0,T]\times\Omega_T\times\mathbb{R}^2\rightarrow\mathbb{R}.
\end{displaymath}

The following assumptions will be needed throughout this section. There exists some $\beta>2$ such that
\begin{description}
\item[(A1)] for any $y,z$, $f(\cdot,\cdot,y,z)$, $g(\cdot,\cdot,y,z)\in M_G^\beta(0,T)$ and  $\hat{\mathbb{E}}[\sup_{t\in[0,T]}(|f(t,0,0)|^\beta+|g(t,0,0)|^\beta)]<\infty$;
\item[(A2)] $|f(t,\omega,y,z)-f(t,\omega,y',z')|+|g(t,\omega,y,z)-g(t,\omega,y',z')|\leq L(|y-y'|+|z-z'|)$ for some $L>0$;
\item[(A3)] $\{S_t\}_{t\in[0,T]}\in S_G^\beta(0,T)$ is of the following form
\begin{equation}\label{S}
S_t=S_0+\int_0^t b(s)ds+\int_0^t l(s)d\langle B\rangle_s+\int_0^t \sigma(s)dB_s,
\end{equation}
where $\{b(t)\}_{t\in[0,T]}$, $\{l(t)\}_{t\in[0,T]}$ belong to $M_G^\beta(0,T)$ and $\{\sigma(t)\}_{t\in[0,T]}$ belongs to $H_G^\beta(0,T)$. Furthermore,  $\hat{\mathbb{E}}[\sup_{t\in[0,T]}\{|b(t)|^\beta+|l(t)|^\beta+|\sigma(t)|^\beta\}]<\infty$;
\item[(A4)] $\xi\in L_G^\beta(\Omega_T)$ and $\xi\leq S_T$, $q.s.$.
\end{description}

Then we can introduce our reflected $G$-BSDE with an upper obstacle. A triplet of processes $(Y,Z,A)$ is called a solution of reflected $G$-BSDE if for some $1<\alpha\leq \beta$ the following properties are satisfied:
\begin{description}
\item[(i)]$(Y,Z,A)\in\mathbb{S}_G^{\alpha}(0,T)$ and $Y_t\leq S_t$, $0\leq t\leq T$;
\item[(ii)]$Y_t=\xi+\int_t^T f(s,Y_s,Z_s)ds+\int_t^T g(s,Y_s,Z_s)d\langle B\rangle_s
-\int_t^T Z_s dB_s+(A_T-A_t)$;
\item[(iii)]$\{-\int_0^t (S_s-Y_s)dA_s\}_{t\in[0,T]}$ is a decreasing $G$-martingale.
\end{description}
Here we denote by $\mathbb{S}_G^{\alpha}(0,T)$ the collection of process $(Y,Z,A)$ such that $Y\in S_G^{\alpha}(0,T)$, $Z\in H_G^{\alpha}(0,T)$, $A\in S_G^\alpha(0,T)$ is a continuous process  with finite variation satisfying $A_0=0$ and $-A$ is a $G$-submartingale.

For notational simplification, in this paper we only consider the case $g\equiv 0$ and $l\equiv 0$. But the results still hold for the other cases.

 \begin{theorem}\label{th1}
Under the above assumptions, in particular (A1)-(A4), the reflected $G$-BSDE with parameters $(\xi,f,S)$ has a solution $(Y,Z,A)$. This solution is the maximal one in the sense that, if $(Y',Z',A')$ is another solution, then $Y_t\geq Y'_t$, for all $t\in[0,T]$
\end{theorem}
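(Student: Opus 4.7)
My plan is to use the penalization method, adapting the approach of Li--Peng for the lower obstacle to the upper-obstacle setting. For each $n \geq 1$, set $f_n(s,y,z) := f(s,y,z) - n(y - S_s)^+$, which still satisfies (H1') and (H2). By Theorem \ref{the1.4} there is a unique $(Y^n, Z^n, K^n) \in \mathfrak{S}_G^\alpha(0,T)$, for every $1 < \alpha < \beta$, with
\[
Y^n_t = \xi + \int_t^T f(s, Y^n_s, Z^n_s)\,ds - n \int_t^T (Y^n_s - S_s)^+\,ds - \int_t^T Z^n_s\,dB_s - (K^n_T - K^n_t).
\]
Writing $\widetilde L^n_t := -n \int_0^t (Y^n_s - S_s)^+\,ds$ (decreasing) and $A^n := \widetilde L^n - K^n$, the process $-A^n$ is a $G$-submartingale of finite variation. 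The monotonicity $f_{n+1} \leq f_n$ combined with Theorem \ref{the1.5} gives $Y^{n+1}_t \leq Y^n_t$, so $\{Y^n\}$ is decreasing in $n$.

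The decisive analytic step is a quantitative rate of convergence for $(Y^n - S)^+$. Using (A3) with $g \equiv 0$ and $l \equiv 0$, the shifted process $\bar Y^n := Y^n - S$ solves a $G$-BSDE with Lipschitz generator $\bar f(s,y,z) := f(s, y + S_s, z + \sigma(s)) + b(s)$, terminal value $\xi - S_T \leq 0$ (by (A4)), and penalization $-n y^+$. Applying $G$-It\^o's formula to a smooth convex approximation of $((\bar Y^n_t)^+)^2$, absorbing the term $n \int_t^T ((\bar Y^n_s)^+)^2\,ds$ on the left-hand side, and combining with Proposition \ref{the1.3} and a Gronwall argument, one obtains a bound of the form $\hat{\mathbb E}[\sup_{t \in [0,T]}((Y^n_t - S_t)^+)^2] \leq C/n$ for a constant $C$ independent of $n$. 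In particular $Y^n$ is uniformly bounded in $S^\alpha_G(0,T)$ via Theorem \ref{the1.4}, and the BSDE itself combined with this rate yields separate uniform $L^\alpha_G$ bounds on $\{\widetilde L^n_T\}$ and on $\{K^n_T\}$.

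With these uniform bounds in hand, a priori estimates for differences of $G$-BSDEs applied to $Y^n - Y^m$ show that $(Y^n, Z^n, A^n)$ is Cauchy in $S^\alpha_G(0,T) \times H^\alpha_G(0,T) \times S^\alpha_G(0,T)$, yielding a limit $(Y, Z, A) \in \mathbb{S}_G^\alpha(0,T)$ with $Y \leq S$ (from the rate estimate). The subtle point is the verification that $-\int_0^\cdot (S_s - Y_s)\,dA_s$ is a decreasing $G$-martingale. Splitting $A^n = \widetilde L^n - K^n$, one has
\[
-\int_0^t (S_s - Y^n_s)\,dA^n_s = -n \int_0^t ((Y^n_s - S_s)^+)^2\,ds + \int_0^t (S_s - Y^n_s)\,dK^n_s,
\]
where the first term is non-positive with $L^1_G$-behaviour controlled by the rate estimate, and the second is identified as the limit of a sequence of decreasing $G$-martingales using the uniform regularity of $S^\alpha_G$-processes (Lemma 3.2 of \cite{HJPS1}, as invoked in Lemma \ref{th10}) together with Lemma \ref{th11}.

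Finally, for maximality, let $(Y', Z', A')$ be any other solution. Since $Y'_s \leq S_s$ quasi-surely, $(Y'_s - S_s)^+ = 0$, so $Y'$ automatically solves the $G$-BSDE with generator $f_n$ in place of $f$, namely
\[
Y'_t = \xi + \int_t^T f_n(s, Y'_s, Z'_s)\,ds - \int_t^T Z'_s\,dB_s - ((-A'_T) - (-A'_t)),
\]
with $-A'$ a $G$-submartingale of finite variation. Applying the variant comparison Theorem \ref{th12} with $\xi^1 = \xi^2 = \xi$, $f_1 = f_2 = f_n$, $K^1 = -A'$, and $K^2 = K^n$ yields $Y'_t \leq Y^n_t$ for every $n$; letting $n \to \infty$ gives $Y'_t \leq Y_t$. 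I expect the main obstacle to be the identification of the limit $A$ and the verification of the martingale condition, because $A$ is merely of finite variation and not monotone, so the classical Skorohod-type arguments available in the lower-obstacle case do not apply directly and must be replaced by the two-piece analysis above together with the variant comparison theorem.
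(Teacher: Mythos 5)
Your overall architecture (penalization, uniform estimates, Cauchy property, verification of the martingale condition, and maximality via Theorem \ref{th12}) matches the paper, and your maximality argument is exactly the paper's. But there is a genuine gap at the decisive analytic step. The rate you propose to prove, $\hat{\mathbb E}\bigl[\sup_{t\in[0,T]}((Y^n_t-S_t)^+)^2\bigr]\leq C/n$, is the classical rate obtained by applying It\^o's formula to $((\bar Y^n)^+)^2$ and absorbing $n\int((\bar Y^n_s)^+)^2\,ds$; it gives $(Y^n-S)^+=O(n^{-1/2})$, and this is \emph{not} sufficient here. First, it does not yield a uniform bound on $\widetilde L^n_T=-n\int_0^T(Y^n_s-S_s)^+\,ds$: your estimate only gives $\hat{\mathbb E}[|\widetilde L^n_T|^2]\leq n^2T\,\hat{\mathbb E}[\int_0^T((Y^n_s-S_s)^+)^2ds]\leq CTn\to\infty$. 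Nor can you recover the bound ``from the BSDE itself'': the equation controls only the difference $K^n_T-\widetilde L^n_T$, and since \emph{both} $K^n_T\leq 0$ and $\widetilde L^n_T\leq 0$ (unlike the lower-obstacle case, where $L^n$ is increasing and $L^n-K^n$ dominates each piece), a bound on the difference gives no separate bounds. This is precisely the structural difficulty that distinguishes the upper-obstacle case. Second, the Cauchy estimate for $Y^n-Y^m$ produces a factor $(n+m)\hat{\mathbb E}_t[\int_0^T|(Y^n_s)^+|^{\alpha-1}(Y^m_s)^+ds+\cdots]$ (for $S\equiv0$), which tends to $0$ only if $(Y^n-S)^+=O(1/n)$; with your $O(n^{-1/2})$ rate and $\alpha=2$ the bound is of order $\sqrt{n/m}+\sqrt{m/n}$, which does not vanish.

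The paper's Lemma \ref{th3} obtains the sharper rate $\hat{\mathbb E}[\sup_t|(Y^n_t-S_t)^+|^\alpha]\leq C/n^\alpha$ by a different device: a nonlinear Girsanov transformation that absorbs the $z$-dependence of the generator into a new $G$-Brownian motion $\tilde B^{\varepsilon,n}$, followed by applying It\^o's formula to $e^{-nt}(Y^n_t-S_t)$ and using $n(\tilde Y^n_s-(\tilde Y^n_s)^+)\leq 0$ together with $\int_t^Tne^{n(t-s)}ds\leq1$, which gives $\tilde Y^n_t\leq \frac{C}{n}\tilde{\mathbb E}^L_t[\cdots]$ pointwise. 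You would need to replace your quadratic estimate by this (or an equivalent $O(1/n)$ bound) for the rest of your plan — the separate bounds on $\widetilde L^n$ and $K^n$, the Cauchy property, and the identification of the limit $A$ — to go through. The remaining parts of your proposal (the two-piece analysis of $-\int(S-Y^n)\,dA^n$ and the use of Lemmas \ref{th10}--\ref{th11} and Theorem \ref{th12}) are in line with the paper once these estimates are available.
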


The proof will be divided into a sequence of lemmas. For $f$, $\{S_t\}_{t\in[0,T]}$ and $\xi$ satisfy (A1)-(A4) with $\beta>2$. We now consider the following family of $G$-BSDEs parameterized by $n=1,2,\ldots$.
\begin{equation}\label{eq1}
Y_t^n=\xi+\int_t^T f(s,Y_s^n,Z^n_s)ds-n\int_t^T(Y_s^n-S_s)^+ds-\int_t^T Z_s^ndB_s-(K_T^n-K_t^n).
\end{equation}

Now let $L_t^n=-n\int_0^t (Y_s^n-S_s)^+ds$. Then $(L_t^n)_{t\in[0,T]}$ is a nonincreasing process. We can rewrite reflected $G$-BSDE \eqref{eq1} as
\begin{equation}
Y_t^n=\xi+\int_t^T f(s,Y_s^n,Z^n_s)ds-\int_t^T Z_s^ndB_s-(K_T^n-K_t^n)+(L_T^n-L_t^n).
\end{equation}

\begin{lemma}\label{th2}
There exists a constant $C$ independent of $n$, such that for $1<\alpha<\beta$, we have \[\hat{\mathbb{E}}[\sup_{t\in[0,T]}|Y_t^n|^\alpha]\leq C.\]
\end{lemma}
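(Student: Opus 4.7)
\textbf{The plan} is to bound $(Y^n)^+$ and $(Y^n)^-$ separately, using comparison for the positive part and a $G$-It\^o argument on the shifted process $Y^n - S$ for the negative part.

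\textbf{Upper bound via comparison.} Let $(\hat Y, \hat Z, \hat K)$ solve the standard $G$-BSDE with parameters $(\xi, f)$; Theorem \ref{the1.4} provides existence together with $\hat{\mathbb{E}}[\sup_t |\hat Y_t|^\alpha] \leq C$. Since the penalized generator $\tilde f_n(s,y,z) := f(s,y,z) - n(y - S_s)^+$ satisfies $\tilde f_n \leq f$ and (for each fixed $n$) verifies (H1')--(H2), the comparison Theorem \ref{the1.5} yields $Y_t^n \leq \hat Y_t$ q.s. Hence $\hat{\mathbb{E}}[\sup_t ((Y_t^n)^+)^\alpha] \leq C$ uniformly in $n$.

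\textbf{Lower bound via a shift.} For the negative part the comparison theorem is useless (the penalty pushes $Y^n$ downward), so I would work with $U_t^n := Y_t^n - S_t$ and $W_t^n := Z_t^n - \sigma(t)$ and exploit the orthogonality between $(U^n)^\pm$. Using (A3) with $l \equiv 0$, $U^n$ satisfies
\begin{equation*}
U_t^n = (\xi - S_T) + \int_t^T F(s, U_s^n, W_s^n) \, ds - n \int_t^T (U_s^n)^+ \, ds - \int_t^T W_s^n \, dB_s - (K_T^n - K_t^n),
\end{equation*}
where $F(s,u,w) := f(s, u + S_s, w + \sigma(s)) + b(s)$ is still $L$-Lipschitz and, by (A4), $\xi - S_T \leq 0$ q.s. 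I would then apply the $G$-It\^o formula to $\phi(U^n) := ((U^n)^-)^\alpha$ for $\alpha \geq 2$ (where $\phi \in C^2$); the case $\alpha \in (1,2)$ would follow from the $\alpha = 2$ bound via Jensen's inequality for the sublinear expectation. Two cancellations are crucial. First, the pointwise identity $(U^n)^-(U^n)^+ = 0$ makes the penalty contribution $\phi'(U^n) \cdot (-n)(U^n)^+$ vanish identically from the expansion, so no $n$-factor ever appears. Second, since $\phi'(U^n) = -\alpha((U^n)^-)^{\alpha-1} \leq 0$ and $K^n$ is decreasing ($dK^n_s \leq 0$), the resulting $K^n$-integral appears with a nonpositive sign on the right-hand side and can simply be discarded. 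Bounding $|F|$ by the Lipschitz estimate, using Young's inequality, and invoking $d\langle B\rangle_s \geq \underline{\sigma}^2 \, ds$ to absorb the $(W^n)^2$ term into the It\^o correction, I expect to arrive at
\begin{equation*}
((U_t^n)^-)^\alpha \leq \hat{\mathbb{E}}_t\Big[(S_T - \xi)^\alpha + \int_t^T |F(s,0,0)|^\alpha \, ds\Big] + C \int_t^T \hat{\mathbb{E}}_t[((U_s^n)^-)^\alpha] \, ds.
\end{equation*}
A Gronwall-type argument (monotonicity of $\hat{\mathbb{E}}$) then gives $\hat{\mathbb{E}}[((U_t^n)^-)^\alpha] \leq C$ uniformly in $n, t$; applying the BDG inequality (Proposition \ref{the1.3}) to the stochastic integral $\int \phi'(U^n) W^n \, dB$, together with Young to absorb into the LHS, upgrades this to $\hat{\mathbb{E}}[\sup_t ((U_t^n)^-)^\alpha] \leq C$.

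\textbf{Combining.} Since $(Y_t^n)^- \leq (U_t^n)^- + (S_t)^-$ and $S \in S_G^\beta(0,T)$, combining with the upper bound from Step 1 yields $\hat{\mathbb{E}}[\sup_t |Y_t^n|^\alpha] \leq C$ for every $1 < \alpha < \beta$, with $C$ independent of $n$.

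\textbf{The main obstacle} is that the decreasing $G$-martingale $K^n$ depends on $n$ and its total variation is a priori uncontrolled (one cannot bound it before bounding $Y^n$), so a direct $G$-It\^o expansion of $|Y^n|^\alpha$ or $|U^n|^\alpha$ produces $K^n$-terms and $n$-factors that resist estimation. The whole argument hinges on the two structural cancellations described above: the orthogonality $(U^n)^-(U^n)^+ = 0$ removing the penalty, and the favorable sign of $\phi'(U^n) \, dK^n$ letting us discard the $K^n$-contribution outright rather than having to track its size.
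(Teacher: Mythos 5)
Your argument is correct, but it is organized quite differently from the paper's. The paper does not split $Y^n$ into positive and negative parts at all: after the same shift $\widetilde Y^n=Y^n-S$ (Remark \ref{th}), it applies $G$-It\^o's formula once to $(|\widetilde Y^n_t|^2+\varepsilon_\alpha)^{\alpha/2}e^{rt}$ and kills the penalty term through the elementary sign identity $-y\,(y)^+\le 0$ (the penalty contributes $-n\alpha\,\widetilde Y^{\alpha/2-1}\,\widetilde Y^n(\widetilde Y^n)^+\,ds\le 0$, so no $n$-factor survives, exactly the role your orthogonality $(U^n)^-(U^n)^+=0$ plays); the $K^n$-integral is split as $(Y^n)^+dK^n$ (a decreasing $G$-martingale, by Lemma 3.4 of \cite{HJPS1}, folded into the martingale $M$) plus $(Y^n)^-dK^n\le 0$. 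This yields the pointwise bound $|\widetilde Y^n_t|^\alpha\le C\hat{\mathbb{E}}_t[\,|\xi-S_T|^\alpha+\int_t^T|f(s,S_s,\sigma(s))+b(s)|^\alpha ds\,]$ in one stroke, and the supremum bound then follows from Song's norm estimate (Theorem \ref{the1.2}) rather than from BDG. Your two-sided scheme buys a conceptually cleaner separation (the upper half is free from Theorem \ref{the1.5}, and your treatment of $dK^n$ via the pointwise sign of $\phi'(U^n)\,dK^n\le 0$ avoids even invoking Lemma 3.4 of \cite{HJPS1}), at the cost of running two arguments where one suffices and of a more laborious passage to $\hat{\mathbb{E}}[\sup_t\cdot]$: your BDG-plus-Young absorption step additionally requires the uniform control of $\hat{\mathbb{E}}[\int_0^T((U^n_s)^-)^{\alpha-2}(W^n_s)^2\,d\langle B\rangle_s]$ extracted from the It\^o identity at $t=0$, whereas the paper reads the supremum bound directly off the conditional estimate via Theorem \ref{the1.2} (this is also why the statement requires $\alpha<\beta$ strictly). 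Two minor points to tighten: $(u^-)^\alpha$ is only $C^{1,1}$ at $\alpha=2$, so you should either work with $\alpha>2$ and pass to smaller exponents by the Lyapunov/H\"older monotonicity of $p\mapsto\hat{\mathbb{E}}[|X|^p]^{1/p}$ (as you indicate), or regularize as the paper does with $\varepsilon_\alpha$; and in the Gronwall step the exchange $\hat{\mathbb{E}}_t[\int_t^T X_s\,ds]\le\int_t^T\hat{\mathbb{E}}_t[X_s]\,ds$ deserves a word, though the exponential weight $e^{rt}$ used in the paper sidesteps Gronwall entirely.
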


\begin{proof}
For simplicity, first we consider the case where $S\equiv 0$. For the case that $S$ is a $G$-It\^{o} process, we may refer to Remark \ref{th}. For any $r,\varepsilon>0$, set $\tilde{Y}_t=(Y_t^n)^2+\varepsilon_\alpha$,
where $\varepsilon_\alpha=\varepsilon(1-\alpha/2)^+$. Applying It\^{o}'s formula to $\tilde{Y}_t^{\alpha/2}e^{rt}$  yields that
\begin{displaymath}
\begin{split}
&\quad\tilde{Y}_t^{\alpha/2}e^{rt}+\int_t^T re^{rs}\tilde{Y}_s^{\alpha/2}ds+\int_t^T \frac{\alpha}{2} e^{rs}
\tilde{Y}_s^{\alpha/2-1}(Z_s^n)^2d\langle B\rangle_s\\
&=(|\xi|^2+\varepsilon_\alpha)^{\frac{\alpha}{2}}e^{rT}+
\alpha(1-\frac{\alpha}{2})\int_t^Te^{rs}\tilde{Y}_s^{\alpha/2-2}(Y_s^n)^2(Z_s^n)^2d\langle B\rangle_s
+\int_t^T\alpha e^{rs}\tilde{Y}_s^{\alpha/2-1}Y_s^ndL_s^n\\
&\quad+\int_t^T{\alpha} e^{rs}\tilde{Y}_s^{\alpha/2-1}Y_s^nf(s,Y_s^n,Z_s^n)ds-
\int_t^T\alpha e^{rs}\tilde{Y}_s^{\alpha/2-1}(Y_s^nZ_s^ndB_s+Y_s^ndK_s^n)\\
&\leq(|\xi|^2+\varepsilon_\alpha)^{\frac{\alpha}{2}}e^{rT}+
\alpha(1-\frac{\alpha}{2})\int_t^Te^{rs}\tilde{Y}_s^{\alpha/2-2}(Y_s^n)^2(Z_s^n)^2d\langle B\rangle_s\\
&\quad+\int_t^T{\alpha} e^{rs}\tilde{Y}_s^{\alpha/2-1/2}|f(s,Y_s^n,Z_s^n)|ds-(M_T-M_t),
\end{split}
\end{displaymath}
where $M_t=\int_0^t\alpha e^{rs}\tilde{Y}_s^{\alpha/2-1}(Y_s^nZ_s^ndB_s+(Y_s^n)^+dK_s^n)$ is a $G$-martingale. In the last inequality, we use the fact that $-y(y)^+\leq 0$ for any $y\in\mathbb{R}$. From the assumption of $f$ and the Young inequality, we have
\begin{displaymath}
\begin{split}
\int_t^T{\alpha} e^{rs}\tilde{Y}_s^{\frac{\alpha-1}{2}}|f(s,Y_s^n,Z_s^n)|ds
\leq &\int_t^T \alpha e^{rs}\tilde{Y}_s^{\frac{\alpha-1}{2}} [|f(s,0,0)|+L|Y^n_s|+L|Z^n_s|]ds\\
\leq &\int_t^T e^{rs}|f(s,0,0)|^\alpha ds+\frac{\alpha(\alpha-1)}{4}\int_t^Te^{rs}\tilde{Y}_s^{\alpha/2-1}(Z_s^n)^2d\langle B\rangle_s\\
&+(\alpha-1+\alpha L+\frac{\alpha L^2}{\underline{\sigma}^2(\alpha-1)})\int_t^T e^{rs}\tilde{Y}_s^{\alpha/2}ds.
\end{split}
\end{displaymath}
Setting $r=\alpha+\alpha L+\frac{\alpha L^2}{\underline{\sigma}^2(\alpha-1)}$, we can get
\begin{displaymath}
\tilde{Y}_t^{\alpha/2}e^{rt}+M_T-M_t\leq (|\xi|^2+\varepsilon_\alpha)^{\frac{\alpha}{2}}e^{rT}+\int_t^T e^{rs}|f(s,0,0)|^\alpha ds,
\end{displaymath}
Taking conditional expectations on both sides and letting $\varepsilon\rightarrow 0$, we have
\begin{displaymath}
|Y_t^n|^\alpha\leq C\hat{\mathbb{E}}_t[|\xi|^\alpha+\int_t^T|f(s,0,0)|^\alpha ds].
\end{displaymath}
By Theorem \ref{the1.2}, we can conclude that for $1<\alpha<\beta$, there exists a constant $C$ independent of $n$ such that $\hat{\mathbb{E}}[\sup_{t\in[0,T]}|Y_t^n|^\alpha]\leq C$.
\end{proof}

\begin{remark}\label{th}
For the case when the obstacle process  $S$ is given in \eqref{S},
 let $\widetilde{Y}^n_t={Y}_t^n-S_t$ and $\widetilde{Z}_t^n={Z}_t^n-\sigma(s)$. We can rewrite \eqref{eq1} as the following:
\begin{displaymath}
\widetilde{Y}_t^n=\xi-S_T+\int_t^T f(s,\widetilde{Y}_s^n+S_s,\widetilde{Z}_s^n+\sigma(s))+b(s)ds-n\int_t^T(\widetilde{Y}_s^n)^+ds-\int_t^T \widetilde{Z}_s^ndB_s-({K}_T^n-{K}_t^n).
\end{displaymath}
Using the same method as the proof of Lemma \ref{th2}, we get that
\begin{displaymath}
|\widetilde{Y}_t^n|^\alpha\leq C\hat{\mathbb{E}}_t[|\xi-S_T|^\alpha+\int_t^T|f(s,S_s,\sigma(s))+b(s)|^\alpha ds].
\end{displaymath}
Thus, we conclude that, for $1<\alpha<\beta$, there exists a constant $C$ independent of $n$ such that $\hat{\mathbb{E}}[\sup_{t\in[0,T]}|{Y}_t^n|^\alpha]\leq C$.
\end{remark}

Compared with Lemma 4.3 in \cite{lp}, the following result is sharper. More importantly, this lemma allows us to establish uniform estimates on the sequence $(K^n,L^n)$ and then to obtain the convergence of $(Y^n)$. We apply a nonlinear Girsanov transformation approach to prove this result. First, we consider the following $G$-BSDE driven by 1-dimensional $G$-Brownian motion:
\begin{displaymath}
Y_t^L=\xi+\int_t^T L|Z_s^L|ds-\int_t^T Z_s^LdB_s-(K_T^L-K_t^L).
\end{displaymath}

For each $\xi\in L_G^\beta(\Omega_T)$ with $\beta>1$, we define
\begin{displaymath}
\tilde{\mathbb{E}}^L_t[\xi]:=Y_t^L.
\end{displaymath}

By Theorem 5.1 in \cite{HJPS2}, $\tilde{\mathbb{E}}^L_t[\cdot]$ is a consistent sublinear expectation.
\begin{lemma}\label{th3}
There exists a constant $C$ independent of $n$ such that for $1<\alpha<\beta$,
\begin{displaymath}
\hat{\mathbb{E}}[\sup_{t\in[0,T]}|(Y_t^n-S_t)^+|^\alpha]\leq \frac{C}{n^\alpha}.
\end{displaymath}
\end{lemma}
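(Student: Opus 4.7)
The plan is to relax the nonlinear penalty $-n(\bar{Y}^n_s)^+$ to the linear discount $-n\bar{Y}^n_s$ via comparison, and then combine an integrating-factor Girsanov argument with the sublinear expectation $\tilde{\mathbb{E}}^L$ to produce an exponentially-in-$n$ decaying representation, which will yield the $1/n$ rate.

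First I would pass to $\bar{Y}^n_t := Y^n_t - S_t$ and $\bar{Z}^n_t := Z^n_t - \sigma(t)$. Setting $F(s,y,z) := f(s,y+S_s,z+\sigma(s))+b(s)$, which is still Lipschitz in $(y,z)$ with constant $L$, a direct computation using (A3) with $l\equiv 0$ shows $(\bar{Y}^n,\bar{Z}^n,K^n)$ satisfies a $G$-BSDE with terminal $\xi-S_T\leq 0$ and driver $F(s,y,z)-n(y)^+$. Since $-(y)^+\leq -y$ for every $y$, the comparison theorem (Theorem \ref{the1.5}) gives $\bar{Y}^n_t\leq \hat{Y}^n_t$, where $(\hat{Y}^n,\hat{Z}^n,\hat{K}^n)$ is the unique solution of the standard $G$-BSDE with the same terminal value and the Lipschitz driver $F(s,y,z)-ny$. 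The obstacle is gone and the penalty has become a genuine linear discount.

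Next I would linearize the driver of the $\hat{Y}^n$-equation around its solution in the style of the proof of Theorem \ref{th12}: for each $\varepsilon>0$ produce processes $a^n,b^n$ with $|a^n|,|b^n|\leq L$ and an error $m^\varepsilon$ with $|m^\varepsilon|\leq 4L\varepsilon$ so that $F(s,\hat{Y}^n_s,\hat{Z}^n_s)=F(s,0,0)+a^n_s\hat{Y}^n_s+b^n_s\hat{Z}^n_s+m^\varepsilon_s$. Introducing the integrating factor $\Gamma^n_t:=\exp(-\int_0^t(n-a^n_u)\,du)$ and applying $G$-It\^o's formula to $\Gamma^n_t\hat{Y}^n_t$ kills the $\hat{Y}^n$-drift and produces
\begin{displaymath}
\Gamma^n_t\hat{Y}^n_t=\Gamma^n_T(\xi-S_T)+\int_t^T\Gamma^n_s[F(s,0,0)+m^\varepsilon_s]\,ds+\int_t^T b^n_s\tilde Z_s\,ds-\int_t^T\tilde Z_s\,dB_s-(\tilde K_T-\tilde K_t),
\end{displaymath}
with $\tilde Z_s:=\Gamma^n_s\hat{Z}^n_s$ and $\tilde K_t:=\int_0^t\Gamma^n_s\,d\hat{K}^n_s$; Lemma \ref{th11} shows $\tilde K$ is a $G$-submartingale of finite variation. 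Because $b^n_s z\leq L|z|$, the variant comparison Theorem \ref{th12} bounds the left-hand side by $U_t$, the solution of the $G$-BSDE with driver $\Gamma^n_s[F(s,0,0)+m^\varepsilon_s]+L|z|$ and the same terminal, and absorbing the $s$-only part into the terminal in the standard way gives the $\tilde{\mathbb{E}}^L$-representation
\begin{displaymath}
\Gamma^n_t\hat{Y}^n_t\leq \tilde{\mathbb{E}}^L_t\Bigl[\Gamma^n_T(\xi-S_T)+\int_t^T\Gamma^n_s[F(s,0,0)+m^\varepsilon_s]\,ds\Bigr].
\end{displaymath}

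Finally, dividing by the $\mathcal{F}_t$-measurable positive $\Gamma^n_t$, discarding the nonpositive $\xi-S_T$ term, using the pointwise bound $\Gamma^n_s/\Gamma^n_t\leq e^{-(n-L)(s-t)}$ (from $|a^n|\leq L$), and letting $\varepsilon\to 0$ yields $\hat{Y}^n_t\leq (n-L)^{-1}\tilde{\mathbb{E}}^L_t[\Phi]$ with $\Phi:=\sup_{s\in[0,T]}|F(s,0,0)|\in L_G^\beta$. Since $(Y^n_t-S_t)^+=(\bar{Y}^n_t)^+\leq (\hat{Y}^n_t)^+\leq \hat{Y}^n_t\vee 0$ and the right-hand side is nonnegative, taking $\sup_t$ and then $\hat{\mathbb{E}}[(\cdot)^\alpha]$, and invoking (i) the Theorem \ref{the1.4} a priori estimate $|\tilde{\mathbb{E}}^L_t[\Phi]|^\alpha\leq C\hat{\mathbb{E}}_t[\Phi^\alpha]$ for the $G$-BSDE defining $\tilde{\mathbb{E}}^L$, together with (ii) the $\mathcal{E}$-norm-versus-$L_G^{\alpha+\delta}$-norm comparison of Theorem \ref{the1.2}, delivers the claimed $C/n^\alpha$ bound. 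I expect the main technical hurdle to be the measurability and function-space membership of the linearization coefficients $a^n,b^n$: one must work throughout with the $\varepsilon$-approximation from the proof of Theorem \ref{th12} and pass to $\varepsilon\to 0$ at the end, while also verifying that $\tilde K$ meets the integrability hypothesis of Theorem \ref{th12}, which follows from the boundedness of $a^n$ together with $\hat{K}^n\in L_G^\beta$ supplied by Theorem \ref{the1.4}.
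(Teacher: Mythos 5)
Your proposal is correct and rests on the same engine as the paper's proof -- an exponential discount at rate $n$ combined with the $\varepsilon$-linearization of the $z$-dependence ($|a^{\varepsilon}|\le L$, error $O(\varepsilon)$), a domination by the auxiliary sublinear expectation $\tilde{\mathbb{E}}^L$, and the closing chain Theorem \ref{the1.4} plus Theorem \ref{the1.2} -- but the middle of the argument is packaged differently. The paper keeps the penalized equation intact, applies It\^o's formula to $e^{-nt}\tilde Y^n_t$ and simply discards the nonpositive term $n\int_t^T e^{n(t-s)}[\tilde Y^n_s-(\tilde Y^n_s)^+]\,ds$; the $z$-term is then absorbed by an explicit Girsanov change of the driving Brownian motion, $\tilde B^{\varepsilon,n}_t=B_t-\int_0^t a^{\varepsilon,n}_s\,ds$, under which $K^n$ remains a martingale (Theorems 5.1--5.2 of \cite{HJPS2}), and $\tilde{\mathbb{E}}^{\varepsilon,n}\le\tilde{\mathbb{E}}^L$ finishes the job. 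You instead first trade $-n(y)^+$ for the genuine linear discount $-ny$ via the ordinary comparison theorem (a legitimate step, since $-(y)^+\le -y$ pointwise), and then treat the $z$-term by an integrating factor together with the variant comparison Theorem \ref{th12}. Your route buys a slightly cleaner endgame: the quantity $\Phi=\sup_s|f(s,S_s,\sigma(s))+b(s)|$ is $n$-independent, so you do not need Lemma \ref{th2} to control a $\sup_u|Y^n_u|$ inside the final expectation as the paper does. The price is a handful of verifications you correctly flag: that $\Gamma^n\in S_G^2(0,T)$ (true, since $\int_0^\cdot a^n_u\,du$ is a uniform limit in $S_G^2$ of step-process integrals and $\exp$ is Lipschitz on the relevant bounded range), that $\tilde K=\int_0^\cdot\Gamma^n_s\,d\hat K^n_s$ is a $G$-submartingale with $Var(\tilde K)\le|\hat K^n_T|\in L_G^{\beta'}$ -- which forces you to run Theorem \ref{th12} with some $\beta'\in(\alpha,\beta)$ rather than $\beta$ itself, since Theorem \ref{the1.4} only gives $\hat K^n_T\in L_G^{\beta'}$ for $\beta'<\beta$ -- and that $\tilde{\mathbb{E}}^L_t$ is positively homogeneous with respect to nonnegative $\mathcal{F}_t$-measurable multipliers so that $\Gamma^n_t$ can be pulled out. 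Finally, the bound $(n-L)^{-1}$ is only useful for $n>L$; the finitely many remaining $n$ are covered by Lemma \ref{th2}, so the constant $C$ absorbs them. None of these is a gap, only bookkeeping.
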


\begin{proof}
Set $\tilde{Y}_t^n=Y_t^n-S_t$, $\tilde{Z}_t^n=Z_t^n-\sigma(t)$, we can rewrite $G$-BSDE \eqref{eq1} as
\begin{equation}\label{eq2}
\tilde{Y}_t^n=\xi-S_T+\int_t^T [f(s,Y_s^n,Z_s^n)+b(s)]ds-\int_t^T n(\tilde{Y}_s^n)^+ds-\int_t^T \tilde{Z}_s^ndB_s-(K_T^n-K_t^n).
\end{equation}

For each given $\varepsilon>0$, we can choose a Lipschitz continuous function $l(\cdot)$ such that $I_{[-\varepsilon,\varepsilon]}(x)\leq l(x)\leq I_{[-2\varepsilon,2\varepsilon]}(x)$. Thus we have
\begin{displaymath}
f(s,Y_s^n,Z_s^n)-f(s,Y_s^n,0)=(f(s,Y_s^n,Z_s^n)-f(s,Y_s^n,0))l(Z_s^n)+a^{\varepsilon,n}_sZ_s^n=:m_s^{\varepsilon,n}+a^{\varepsilon,n}_sZ_s^n,
\end{displaymath}
where
\begin{displaymath}
a^{\varepsilon,n}_s=\begin{cases}(1-l(Z_s^n))(f(s,Y_s^n,Z_s^n)-f(s,Y_s^n,0))(Z_s^n)^{-1}, &\textrm{ if }Z_s^n\neq 0;\\
0, &\textrm{ otherwise }.
\end{cases}
\end{displaymath}
It is easy to check that $a^{\varepsilon,n}\in M_G^2(0,T)$, $|a^{\varepsilon,n}_s|\leq L$ and $|m_s^{\varepsilon,n}|\leq 2L\varepsilon$. Then we can get
\begin{displaymath}
f(s,Y_s^n,Z_s^n)=f(s,Y_s^n,0)+a^{\varepsilon,n}_sZ_s^n+m_s^{\varepsilon,n}=f(s,Y_s^n,0)+a^{\varepsilon,n}_s\sigma(s)+a^{\varepsilon,n}_s\tilde{Z}_s^n+m_s^{\varepsilon,n}.
\end{displaymath}

Now we consider the following $G$-BSDE:
\begin{displaymath}
Y_t^{\varepsilon,n}=\xi+\int_t^T a^{\varepsilon,n}_sZ_s^{\varepsilon,n}ds-\int_t^T Z_s^{\varepsilon,n}dB_s-(K_T^{\varepsilon,n}-K_t^{\varepsilon,n}).
\end{displaymath}
For each $\xi\in L_G^\beta(\Omega_T)$ with $\beta>1$, we define
\begin{displaymath}
\tilde{\mathbb{E}}^{\varepsilon,n}_t[\xi]:=Y_t^{\varepsilon,n}.
\end{displaymath}
Set $\tilde{B}^{\varepsilon,n}_t=B_t-\int_0^t a^{\varepsilon,n}_s ds$. By Theorem 5.2 in \cite{HJPS2}, $\{\tilde{B}^{\varepsilon,n}_t\}$ is a $G$-Brownian motion under $\tilde{\mathbb{E}}^{\varepsilon,n}[\cdot]$. Moreover, by Theorem \ref{the1.5}, we have $\tilde{\mathbb{E}}^{\varepsilon,n}_t[\xi]\leq \tilde{\mathbb{E}}^L_t[\xi]$, $\forall \xi\in L_G^\beta(\Omega_T)$. We can rewrite $G$-BSDE \eqref{eq2} as the following
\begin{displaymath}
\tilde{Y}_t^n=\xi-S_T+\int_t^T f^{\varepsilon,n}(s)ds-\int_t^T n(\tilde{Y}_s^n)^+ds-\int_t^T \tilde{Z}_s^nd\tilde{B}^{n,\varepsilon}_s-(K_T^n-K_t^n),
\end{displaymath}
where $f^{\varepsilon,n}(s)=f(s,Y_s^n,0)+m_s^{\varepsilon,n}+a_s^{\varepsilon,n}\sigma(s)+b(s)$. Applying $G$-It\^{o}'s formula to $e^{-nt}\tilde{Y}_t^n$, we get
\begin{align*}
\tilde{Y}_t^n+\int_t^T e^{n(t-s)}dK_s^n=&(\xi-S_T)e^{n(t-T)}+\int_t^T ne^{n(t-s)}[\tilde{Y}_s^n-(\tilde{Y}_s^n)^+]ds\\
&+\int_t^T e^{n(t-s)}f^{\varepsilon,n}(s)ds-\int_t^T e^{n(t-s)}\tilde{Z}_s^ndB^{\varepsilon,n}_s\\
\leq &\int_t^T e^{n(t-s)}|f^{\varepsilon,n}(s)|ds-\int_t^T e^{n(t-s)}\tilde{Z}_s^ndB^{\varepsilon,n}_s.
\end{align*}
Note that $\tilde{\mathbb{E}}^{\varepsilon,n}_s[K_t^n]=K_s^n$ for any $0\leq s\leq t\leq T$ by Theorem 5.1 in \cite{HJPS2}. Taking $\tilde{\mathbb{E}}^{\varepsilon,n}_t$ conditional expectation on both sides, we have
\begin{align*}
\tilde{Y}_t^n\leq &\tilde{\mathbb{E}}^{\varepsilon,n}_t[\int_t^T e^{n(t-s)}|f^{\varepsilon,n}(s)|ds]
\leq \tilde{\mathbb{E}}^L_t[\int_t^T e^{n(t-s)}|f^{\varepsilon,n}(s)|ds]\\
\leq &\tilde{\mathbb{E}}^L_t[\int_t^T e^{n(t-s)}\sup_{u\in[0,T]}[|f(u,0,0)|+L|Y_u^n|+|m_u^{\varepsilon,n}|+L|\sigma(u)|+|b(u)|]ds]\\
\leq &\frac{C}{n}\tilde{\mathbb{E}}^L_t[\sup_{u\in[0,T]}[|f(u,0,0)|+|Y_u^n|+|\sigma(u)|+|b(u)|]+\varepsilon].
\end{align*}
By Theorem \ref{the1.4}, for $1<\alpha<\beta$, it follows that
\begin{align*}
|(\tilde{Y}_t^n)^+|^\alpha\leq &\frac{C}{n^\alpha}(\tilde{\mathbb{E}}^L_t[\sup_{u\in[0,T]}[|f(u,0,0)|+|Y_u^n|+|\sigma(u)|+|b(u)|]+\varepsilon])^\alpha\\
\leq &\frac{C}{n^\alpha}\hat{\mathbb{E}}_t[\sup_{u\in[0,T]}[|f(u,0,0)|+|Y_u^n|+|\sigma(u)|+|b(u)|+\varepsilon]^\alpha].
\end{align*}
Then applying Lemma \ref{th2} and Theorem \ref{the1.2}, letting $\varepsilon\rightarrow\infty$, we get the desired result.
\end{proof}

\begin{lemma}\label{th4}
There exists a constant $C$ independent of $n$, such that for $1<\alpha<\beta$,
\begin{displaymath}
\hat{\mathbb{E}}[|L_T^n|^\alpha]=\hat{\mathbb{E}}[n^\alpha(\int_0^T (Y_s^n-S_s)^+ds)^\alpha]\leq C, \
\hat{\mathbb{E}}[|K_T^n|^\alpha]\leq C, \  \hat{\mathbb{E}}[(\int_0^T |Z_s^n|^2ds)^{\frac{\alpha}{2}}]\leq C.
\end{displaymath}
\end{lemma}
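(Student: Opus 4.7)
The plan is to split the argument into three stages, exploiting the fact that the $L^n$ bound is essentially free from Lemma \ref{th3}, and then closing a coupled bootstrap between $K^n$ and $Z^n$.

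First, I would observe that $|L_T^n|=n\int_0^T(Y_s^n-S_s)^+\,ds\le nT\sup_{s\in[0,T]}(Y_s^n-S_s)^+$, so Lemma \ref{th3} gives $\hat{\mathbb{E}}[|L_T^n|^\alpha]\le n^\alpha T^\alpha\cdot(C/n^\alpha)=C$ for every $1<\alpha<\beta$. This is precisely why the sharp $1/n$ rate in Lemma \ref{th3} matters: a crude bound would let $|L_T^n|$ explode with $n$.

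Second, reading the penalized $G$-BSDE at $t=0$ as
$$K_T^n=\xi-Y_0^n+\int_0^T f(s,Y_s^n,Z_s^n)\,ds-\int_0^T Z_s^n\,dB_s+L_T^n,$$
the Lipschitz property of $f$, Lemma \ref{th2}, the BDG estimate of Proposition \ref{the1.3}, and the preceding $L^n$ bound yield
$$\hat{\mathbb{E}}[|K_T^n|^\alpha]\le C_1\Bigl(1+\hat{\mathbb{E}}\Bigl[\Bigl(\int_0^T|Z_s^n|^2\,d\langle B\rangle_s\Bigr)^{\alpha/2}\Bigr]\Bigr). \qquad (\mathrm{I})$$

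Third, applying $G$-It\^{o}'s formula to $|Y_t^n|^2$ and rearranging gives
\begin{align*}
\int_0^T|Z_s^n|^2\,d\langle B\rangle_s &=|\xi|^2-|Y_0^n|^2+2\int_0^T Y_s^n f(s,Y_s^n,Z_s^n)\,ds\\
&\quad-2\int_0^T Y_s^n Z_s^n\,dB_s-2\int_0^T Y_s^n\,dK_s^n+2\int_0^T Y_s^n\,dL_s^n.
\end{align*}
Since $K^n$ is decreasing with $K_0^n=0$, its total variation equals $|K_T^n|$, so $|\!\int_0^T Y_s^n\,dK_s^n|\le\sup_t|Y_t^n|\cdot|K_T^n|$ and analogously $|\!\int_0^T Y_s^n\,dL_s^n|\le\sup_t|Y_t^n|\cdot|L_T^n|$. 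Raising to the $\alpha/2$ power, using the Lipschitz condition together with Young's inequality to absorb a small multiple of $(\int|Z^n|^2\,d\langle B\rangle)^{\alpha/2}$ to the left, applying BDG to $\int Y^n Z^n\,dB$ and Young again, and finally splitting $(\sup|Y^n|\cdot|K_T^n|)^{\alpha/2}$ by Young as $\delta|K_T^n|^\alpha+C_\delta(\sup|Y^n|)^\alpha$, I would arrive at
$$\hat{\mathbb{E}}\Bigl[\Bigl(\int_0^T|Z_s^n|^2\,d\langle B\rangle_s\Bigr)^{\alpha/2}\Bigr]\le C_2(\delta)+\delta\,\hat{\mathbb{E}}[|K_T^n|^\alpha] \qquad (\mathrm{II})$$
for every $\delta>0$, where Lemma \ref{th2} absorbs the $\sup|Y^n|$ contributions uniformly in $n$.

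Finally, substituting $(\mathrm{I})$ into $(\mathrm{II})$ and choosing $\delta$ so small that $C_1\delta<1/2$ closes the bootstrap, yielding a uniform bound on $\hat{\mathbb{E}}[(\int_0^T|Z_s^n|^2\,d\langle B\rangle_s)^{\alpha/2}]$, whence $\hat{\mathbb{E}}[|K_T^n|^\alpha]\le C$ via $(\mathrm{I})$. The non-degeneracy $\underline{\sigma}^2\,ds\le d\langle B\rangle_s$ then converts this into the required bound on $\hat{\mathbb{E}}[(\int_0^T|Z_s^n|^2\,ds)^{\alpha/2}]$.

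\textbf{Expected main obstacle.} The delicate point is the treatment of $\int_0^T Y_s^n\,dK_s^n$ in stage three: since $\hat{\mathbb{E}}$ is only sublinear, one cannot pass to zero mean via the $G$-martingale property when the integrand is a general adapted process. Exploiting monotonicity of $K^n$ to reduce the integral to $\sup|Y^n|\cdot|K_T^n|$ and then using Young's inequality to separate the two factors is the cleanest way forward, but the parameter $\delta$ must be calibrated against the constant $C_1$ in $(\mathrm{I})$ within a single coordinated step so the coupled inequalities actually close rather than producing a circular bound.
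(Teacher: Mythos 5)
Your proposal is correct and follows essentially the same route as the paper: the $L^n$ bound is read off from the $1/n$ rate of Lemma \ref{th3}, the $K^n$ bound comes from evaluating the penalized equation at $t=0$, and the $Z^n$ bound from $G$-It\^{o}'s formula applied to $|Y_t^n|^2$, with the two resulting inequalities combined to close the loop. The only cosmetic difference is that you decouple the cross term $(\sup_t|Y_t^n|\,|K_T^n|)^{\alpha/2}$ by Young's inequality with a small parameter $\delta$, whereas the paper uses the H\"older/Cauchy--Schwarz splitting $(\hat{\mathbb{E}}[\sup_t|Y_t^n|^\alpha])^{1/2}(\hat{\mathbb{E}}[|K_T^n|^\alpha])^{1/2}$ and closes the bootstrap through the resulting sublinear (square-root) growth; both devices are equivalent here.
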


\begin{proof}
The first estimate can be derived easily from Lemma \ref{th3}. Applying $G$-It\^{o}'s formula to $|Y_t^n|^2$, we have
\begin{displaymath}
|Y_0^n|^2+\int_0^T |Z_s^n|^2d\langle B\rangle_s=|\xi|^2+\int_0^T 2Y_s^nf(s,Y_s^n,Z_s^n)ds-\int_0^T 2Y_s^nZ_s^ndB_s-\int_0^T 2Y_s^nd(K_s^n-L_s^n).
\end{displaymath}
Consequently
\begin{displaymath}
(\int_0^T |Z_s^n|^2d\langle B\rangle_s)^{\frac{\alpha}{2}}\leq C\{|\xi|^\alpha+|\int_0^T Y_s^nf(s,Y_s^n,Z_s^n)ds|^\alpha+|\int_0^T Y_s^nZ_s^ndB_s|^\alpha+|\int_0^T 2Y_s^nd(K_s^n-L_s^n)|^\alpha\}.
\end{displaymath}
By Proposition \ref{the1.3} and simple calculation, we obtain
\begin{equation}\label{eq3}\begin{split}
\hat{\mathbb{E}}[(\int_0^T |Z_s^n|^2ds)^{\frac{\alpha}{2}}]\leq &C\{\hat{\mathbb{E}}[\sup_{t\in[0,T]}|Y_t^n|^\alpha]+(\hat{\mathbb{E}}[\sup_{t\in[0,T]}|Y_t^n|^\alpha])^{1/2}[(\hat{\mathbb{E}}[|K_T^n|^\alpha])^{1/2}\\
&+(\hat{\mathbb{E}}[|L_T^n|^\alpha])^{1/2}+(\hat{\mathbb{E}}[(\int_0^T |f(s,0,0)|ds)^\alpha])^{1/2}]\}.
\end{split}\end{equation}
On the other hand,
\begin{displaymath}
K_T^n=\xi-Y_0^n+\int_0^T f(s,Y_s^n,Z_s^n)ds-\int_0^T Z_s^ndB_s+L_T^n.
\end{displaymath}
An easy computation shows that
\begin{equation}\label{eq4}
\hat{\mathbb{E}}[|K_T^n|^\alpha]\leq C\{\hat{\mathbb{E}}[\sup_{t\in[0,T]}|Y_t^n|^\alpha]+\hat{\mathbb{E}}[|L_T^n|^\alpha]
+\hat{\mathbb{E}}[(\int_0^T |f(s,0,0)|ds)^\alpha]+\hat{\mathbb{E}}[(\int_0^T |Z_s^n|^2ds)^{\frac{\alpha}{2}}]\}.
\end{equation}
Combining inequalities \eqref{eq3} and \eqref{eq4}, we can easily see the desired results.
\end{proof}

\begin{remark}\label{th6}
Set $A_t^n=L_t^n-K_t^n$. Then $(A_t^n)_{t\in[0,T]}$ is a process with finite variation . Moreover, it is easy to check that $(-A_t^n)_{t\in[0,T]}$ is a $G$-submartingale.  We denote by $Var(A^n)$ the total variation for $(A^n)$ on $[0,T]$. Then there exists a constant $C$ independent of $n$, such that for $1<\alpha<\beta$
\begin{displaymath}
\hat{\mathbb{E}}[|Var(A^n)|^\alpha]\leq C\{\hat{\mathbb{E}}[|L_T^n|^\alpha]+\hat{\mathbb{E}}[|K_T^n|^\alpha]\}\leq C.
\end{displaymath}
\end{remark}

We now show that the sequences $(Y^n)_{n=1}^\infty$, $(Z^n)_{n=1}^\infty$ and $(A^n)_{n=1}^\infty$ are convergent.
 \begin{lemma}\label{th7}
For $m,n\in\mathbb{N}$, set $\hat{Y}_t= Y_t^n-Y^m_t$, $\hat{Z}_t=Z^n_t-Z^m_t$ and $\hat{A}_t=A_t^n-A_t^m$. Then for any $2\leq \alpha<\beta$, we have
\begin{align}
\lim_{m,n\rightarrow \infty}\hat{\mathbb{E}}[\sup_{t\in[0,T]}|\hat{Y}_t|^\alpha]=0,\ \
\lim_{m,n\rightarrow \infty}\hat{\mathbb{E}}[(\int_0^T |\hat{Z}_s|^2 ds)^{\frac{\alpha}{2}}]=0,\ \  \lim_{m,n\rightarrow \infty} \hat{\mathbb{E}}[\sup_{t\in[0,T]}|\hat{A}_t|^\alpha]=0.\label{eqnx11}
\end{align}
\end{lemma}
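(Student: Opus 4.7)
The plan is to derive the BSDE for the difference, apply G-It\^o's formula to $|\hat{Y}_t|^\alpha$, and carefully control the finite-variation contribution. Subtracting the penalized equations for $Y^n$ and $Y^m$ yields
$\hat{Y}_t = \int_t^T [f(s,Y_s^n,Z_s^n)-f(s,Y_s^m,Z_s^m)]\,ds - \int_t^T\hat{Z}_s\,dB_s + (\hat{A}_T-\hat{A}_t),$
where $\hat{A}=(L^n-L^m)-(K^n-K^m)$ has finite variation with $\hat{A}_0=0$. Since the driver $f-n(y-S)^+$ of the penalized equation is pointwise non-increasing in $n$, Theorem \ref{the1.5} gives $Y^n\leq Y^m$ quasi-surely whenever $m<n$, so $\hat{Y}\leq 0$ throughout $[0,T]$; this sign will be decisive.

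Next, G-It\^o's formula applied to $|\hat{Y}_t|^\alpha$ (for $\alpha\geq 2$, via the regularization $(|\hat{Y}|^2+\varepsilon)^{\alpha/2}$ and $\varepsilon\downarrow 0$), together with (A2) and Young's inequality, produces after absorbing the Lipschitz piece
$|\hat{Y}_t|^\alpha+c_\alpha\int_t^T|\hat{Y}_s|^{\alpha-2}|\hat{Z}_s|^2\,d\langle B\rangle_s \leq C\int_t^T|\hat{Y}_s|^\alpha\,ds + \alpha\int_t^T|\hat{Y}_s|^{\alpha-2}\hat{Y}_s\,d\hat{A}_s - \alpha\int_t^T|\hat{Y}_s|^{\alpha-2}\hat{Y}_s\hat{Z}_s\,dB_s.$
The core task is to bound the $d\hat{A}$-integral. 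Splitting $d\hat{A}=dL^n-dL^m-dK^n+dK^m$ and exploiting $\hat{Y}\leq 0$ together with $dK^n,dK^m\leq 0$: (a) for the two $L$-integrals, substitute $\hat{Y}=(Y^n-S)-(Y^m-S)$; the diagonal pieces $-\alpha n\int|\hat{Y}|^{\alpha-2}((Y^n-S)^+)^2\,ds$ and its $m$-analogue are favorable (non-positive), while the cross pieces involving $n(Y^m-S)^+(Y^n-S)^+$ are dominated in expectation by H\"older and the sharp rate of Lemma \ref{th3}, contributing at most $C(1/m+1/n)$; (b) the $-dK^n$-integral pairs a non-positive integrand against a non-positive measure and is thus $\leq 0$, so we discard it; (c) the remaining $+dK^m$-integral is controlled by $\alpha\sup_s|\hat{Y}_s|^{\alpha-1}|K^m_T|$, where $|K^m_T|$ has only a uniform $L_G^\alpha$-bound from Lemma \ref{th4}.

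Step (c) is the main obstacle. Since $\|K^m_T\|$ does not itself decay in $m$, a H\"older-Young bound on $\sup|\hat{Y}|^{\alpha-1}|K^m_T|$ will only absorb a small multiple of $\hat{\mathbb{E}}[\sup|\hat{Y}|^\alpha]$ into the left-hand side while leaving a constant residual; closing the estimate therefore requires extracting from (a) a quantitatively small coefficient. Here the sharp $O(1/n^\alpha)$ rate of Lemma \ref{th3}, the uniform $Y^n$-bound of Lemma \ref{th2}, and the assumption $\beta>2$ via Theorem \ref{the1.2}'s $L_G^\alpha$--$\mathcal{E}$ norm comparison, together give the needed room. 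A Gronwall argument then yields $\hat{\mathbb{E}}[\sup_t|\hat{Y}_t|^\alpha]\to 0$. The limit for $\hat{Z}$ follows from the It\^o identity at $t=0$ combined with Proposition \ref{the1.3} and the already-controlled $d\hat{A}$-bounds, and the limit for $\hat{A}$ follows from the representation $\hat{A}_t=\hat{Y}_0-\hat{Y}_t-\int_0^t(f(s,Y^n,Z^n)-f(s,Y^m,Z^m))\,ds+\int_0^t\hat{Z}_s\,dB_s$ by taking $S_G^\alpha$-norms, completing \eqref{eqnx11}.
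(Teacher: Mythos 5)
Your overall strategy coincides with the paper's: apply It\^o's formula to $|\hat{Y}|^\alpha$ (via the regularization $(|\hat Y|^2+\varepsilon)^{\alpha/2}$), drop the favorable diagonal penalization terms $-n((Y^n-S)^+)^2\,ds$, control the cross terms $n(Y^n-S)^+(Y^m-S)^+$ by H\"older together with the $O(1/n^\alpha)$ rate of Lemma \ref{th3}, pass from the pointwise conditional bound to the supremum via Theorem \ref{the1.2}, and then obtain the $\hat Z$- and $\hat A$-limits from the It\^o identity at $t=0$ and the representation of $\hat A$. Those parts are fine. The genuine gap is your step (c), and you have correctly identified it as the crux but your proposed resolution does not work. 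Bounding $\int_t^T |\hat Y_s|^{\alpha-2}\hat Y_s\,dK^m_s$ by $\alpha\sup_s|\hat Y_s|^{\alpha-1}|K^m_T|$ and then applying Young's inequality leaves a residual $C_\epsilon\,\hat{\mathbb{E}}[|K^m_T|^\alpha]$, which by Lemma \ref{th4} is only \emph{bounded} uniformly in $m$ --- it does not tend to zero. No ``quantitatively small coefficient'' extracted from the penalization cross terms can help: those terms tend to zero and cannot cancel a non-vanishing constant. The Gronwall step then closes only to $\hat{\mathbb{E}}[\sup_t|\hat Y_t|^\alpha]\leq C$, which you already know from Lemma \ref{th2}, not to convergence.

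The missing idea is structural, not quantitative: by Lemma 3.4 in \cite{HJPS1}, if $\eta\geq 0$ lies in a suitable $S_G$-space and $K$ is a decreasing $G$-martingale, then $\int_0^\cdot \eta_s\,dK_s$ is again a (decreasing) $G$-martingale. Since you have reduced to $\hat Y\leq 0$ (so $\hat Y=-(\hat Y)^-$), the offending term is exactly
\begin{equation*}
\int_t^T \alpha e^{rs}\bar Y_s^{\alpha/2-1}\hat Y_s\,dK^m_s \;=\; -\bigl(N_T-N_t\bigr),\qquad N_t:=\int_0^t \alpha e^{rs}\bar Y_s^{\alpha/2-1}(\hat Y_s)^-\,dK^m_s,
\end{equation*}
with $N$ a $G$-martingale. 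Grouping $N$ together with the stochastic integral into a single $G$-martingale $M$ and writing the inequality in the form $\bar Y_t^{\alpha/2}e^{rt}+(M_T-M_t)\leq \Theta$, one takes $\hat{\mathbb{E}}_t$ and uses $\hat{\mathbb{E}}_t[M_T-M_t]=0$ together with translation invariance with respect to the $\mathcal{F}_t$-measurable term: the $dK^m$ contribution disappears entirely, and no norm estimate on $K^m_T$ is ever needed. This is precisely how the paper's proof of Lemma \ref{th7} proceeds (there without assuming a sign on $\hat Y$, by splitting $\hat Y=(\hat Y)^+-(\hat Y)^-$ and putting the two nonnegative-integrand pieces into $M$). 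With this replacement for your step (c), the rest of your argument goes through.
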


\begin{proof}
The convergence property for $(Y^n)_{n=1}^\infty$ can be proved in a similar way as the proof of Lemma 4.4 in \cite{lp}. For reader's convenience, we give a brief proof here.
Without loss of generality, we may assume $S\equiv 0$ in \eqref{eq1}. Set $\hat{L}_t=L_t^n-L_t^m$, $\hat{K}_t=K_t^n-K_t^m$, $\hat{f}_t=f(t,Y_t^n,Z_t^n)-f(t,Y_t^m,Z_t^m)$ and $\bar{Y}_t=|\hat{Y}_t|^2$. By applying It\^{o}'s formula to $\bar{Y}_t^{\alpha/2}e^{rt}$, where $r$ is a constant to be determined later, we  get
\begin{equation}\label{eq1.5}
\begin{split}
&\quad\bar{Y}_t^{\alpha/2}e^{rt}+\int_t^T re^{rs}\bar{Y}_s^{\alpha/2}ds+\int_t^T \frac{\alpha}{2} e^{rs}
\bar{Y}_s^{\alpha/2-1}(\hat{Z}_s)^2d\langle B\rangle_s\\
&=
\alpha(1-\frac{\alpha}{2})\int_t^Te^{rs}\bar{Y}_s^{\alpha/2-2}(\hat{Y}_s)^2(\hat{Z}_s)^2d\langle B\rangle_s
+\int_t^T\alpha e^{rs}\bar{Y}_s^{\alpha/2-1}\hat{Y}_sd\hat{L}_s\\
&\quad+\int_t^T{\alpha} e^{rs}\bar{Y}_s^{\alpha/2-1}\hat{Y}_s\hat{f}_sds-\int_t^T\alpha e^{rs}\bar{Y}_s^{\alpha/2-1}(\hat{Y}_s\hat{Z}_sdB_s+\hat{Y}_sd\hat{K}_s)\\
&\leq
\alpha(1-\frac{\alpha}{2})\int_t^Te^{rs}\bar{Y}_s^{\alpha/2-2}(\hat{Y}_s)^2(\hat{Z}_s)^2d\langle B\rangle_s+\int_t^T{\alpha} e^{rs}\bar{Y}_s^{\frac{\alpha-1}{2}}|\hat{f}_s|ds\\
&\quad-\int_t^T\alpha e^{rs}\bar{Y}_s^{\alpha/2-1}(Y_s^n)^+dL^m_s-\int_t^T\alpha e^{rs}\bar{Y}_s^{\alpha/2-1}(Y_s^m)^+dL^n_s-(M_T-M_t),
\end{split}
\end{equation}
where $M_t=\int_0^t \alpha e^{rs}\bar{Y}_s^{\alpha/2-1}(\hat{Y}_s\hat{Z}_sdB_s+(\hat{Y}_s)^+dK_s^m+(\hat{Y}_s)^-dK_s^n)$. By Lemma 3.4 in \cite{HJPS1}, $\{M_t\}$ is a $G$-martingale. Applying the assumption on $f$ and the H\"{o}lder inequality, we obtain
\begin{displaymath}
\int_t^T{\alpha} e^{rs}\bar{Y}_s^{\frac{\alpha-1}{2}}|\hat{f}_s|ds
\leq (\alpha L+\frac{\alpha L^2}{\underline{\sigma}^2(\alpha-1)})\int_t^T e^{rs}\bar{Y}_s^{\alpha/2}ds
+\frac{\alpha(\alpha-1)}{4}\int_t^Te^{rs}\bar{Y}_s^{\alpha/2-1}(\hat{Z}_s)^2d\langle B\rangle_s.
\end{displaymath}
Let $r=1+\alpha L+\frac{\alpha L^2}{\underline{\sigma}^2(\alpha-1)}$. The above analysis shows that
\[\bar{Y}_t^{\alpha/2}e^{rt}+(M_T-M_t)\leq-\int_t^T\alpha e^{rs}\bar{Y}_s^{\alpha/2-1}(Y_s^n)^+dL^m_s-\int_t^T\alpha e^{rs}\bar{Y}_s^{\alpha/2-1}(Y_s^m)^+dL^n_s.\]
Then taking conditional expectation on both sides of the above inequality, we conclude that
\begin{displaymath}\begin{split}
|\hat{Y}_t|^{\alpha}\leq&
C\hat{\mathbb{E}}_t[-\int_t^T\bar{Y}_s^{\alpha/2-1}(Y_s^n)^+dL^m_s-\int_t^T\bar{Y}_s^{\alpha/2-1}(Y_s^m)^+dL^n_s]\\
\leq &C(n+m)\hat{\mathbb{E}}_t[\int_0^T|(Y_s^n)^+|^{\alpha-1}(Y_s^m)^+ds+\int_0^T|(Y_s^m)^+|^{\alpha-1}(Y_s^n)^+ds].
\end{split}\end{displaymath}
By Lemma \ref{th2}-\ref{th4}, Theorem \ref{the1.2} and the H\"{o}lder inequality, we have for some $2\leq \alpha<\beta$,
\[\lim_{n,m\rightarrow \infty}\hat{\mathbb{E}}[\sup_{t\in[0,T]}|Y_t^n-Y_t^m|^\alpha]=0.\]
Choosing $\alpha=2$ and $r=0$ in \eqref{eq1.5}, we get
\[|\hat{Y}_0|^2+\int_0^T|\hat{Z}_s|^2 d\langle B\rangle_s
=\int_0^T2\hat{Y}_s\hat{f}_sds-\int_0^T 2\hat{Y}_sd\hat{K}_s
+\int_0^T2\hat{Y}_sd\hat{L}_s-\int_0^T 2\hat{Y}_s\hat{Z}_s dB_s.\]
Observe that
\[\int_0^T2\hat{Y}_s\hat{f}_sds\leq 2L\int_0^T (|\hat{Y}_s|^2+|\hat{Y}_s||\hat{Z}_s|)ds\leq (2L+L^2/\varepsilon)\int_0^T|\hat{Y}_s|^2ds+\varepsilon\int_0^T|\hat{Z}_s|^2ds,\]
where $\varepsilon<\underline{\sigma}^2$. The above two equations yield that
\[\int_0^T|\hat{Z}_s|^2 ds
\leq C(\int_0^T|\hat{Y}_s|^2ds-\int_0^T \hat{Y}_sd\hat{K}_s
+\int_0^T\hat{Y}_sd\hat{L}_s-\int_0^T \hat{Y}_s\hat{Z}_s dB_s).\]
Applying Lemma \ref{th2}, Lemma \ref{th4}, Proposition \ref{the1.3} and the H\"{o}lder inequality, we derive that
\begin{align*}
\hat{\mathbb{E}}[(\int_0^T|\hat{Z}_s|^2)^{\frac{\alpha}{2}}]
&\leq C \{\hat{\mathbb{E}}[\sup_{t\in[0,T]}|\hat{Y}_t|^\alpha+\sup_{t\in[0,T]}|\hat{Y}_t|^{\frac{\alpha}{2}}(\lambda^{n,m}_T)^{\frac{\alpha}{2}}]
+\hat{\mathbb{E}}[(\int_0^T \hat{Y}_s^2\hat{Z}_s^2 ds)^{\frac{\alpha}{4}}]\}\\
&\leq C\{\hat{\mathbb{E}}[\sup_{t\in[0,T]}|\hat{Y}_t|^\alpha]+(\hat{\mathbb{E}}[\sup_{t\in[0,T]}|\hat{Y}_t|^\alpha])^{\frac{1}{2}}\}
+\frac{C^2}{2}\hat{\mathbb{E}}[\sup_{t\in[0,T]}|\hat{Y}_t|^\alpha]+\frac{1}{2}\hat{\mathbb{E}}[(\int_0^T|\hat{Z}_s|^2)^{\frac{\alpha}{2}}],
\end{align*}
where $\lambda_T^{n,m}=|L_T^n|+|L_T^m|+|K_T^n|+|K_T^m|$. It follows that
\begin{displaymath}
\lim_{n,m\rightarrow \infty}\hat{\mathbb{E}}[(\int_0^T|Z_s^n-Z_s^m|^2ds)^{\frac{\alpha}{2}}]=0.
\end{displaymath}
From Proposition \ref{the1.3} and the assumption of $f$, we have
\begin{displaymath}
\begin{split}
\hat{\mathbb{E}}[\sup_{t\in[0,T]}|A_t^n-A_t^m|^\alpha]
&\leq C\hat{\mathbb{E}}[\sup_{t\in[0,T]}|\hat{Y}_t|^\alpha+(\int_0^T|\hat{f}_s| ds)^\alpha
+\sup_{t\in[0,T]}|\int_0^t \hat{Z}_sdB_s|^\alpha]\\
&\leq C\{\hat{\mathbb{E}}[\sup_{t\in[0,T]}|\hat{Y}_t|^\alpha]+\hat{\mathbb{E}}[(\int_0^T|\hat{Z}_s|^2ds)^{\alpha/2}]\}\rightarrow 0.
\end{split}
\end{displaymath}
Using the convergence property of $(Y^n)_{n=1}^\infty$ and $(Z^n)_{n=1}^\infty$, it is easy to check that
\begin{displaymath}
\lim_{n,m\rightarrow \infty}\hat{\mathbb{E}}[\sup_{t\in[0,T]}|A_t^n-A_t^m|^{\alpha}]=0.
\end{displaymath}
\end{proof}



We now turn to the proof of Theorem \ref{th1}.

\begin{proof}
According to Lemma \ref{th7}, for any $2\leq \alpha<\beta$, there exists a triple $(Y,Z,A)\in \mathbb{S}_G^\alpha(0,T)$, such that
\[\lim_{n\rightarrow \infty}\hat{\mathbb{E}}[\sup_{t\in[0,T]}|Y_t^n-Y_t|^\alpha]=0,\ \
\lim_{n\rightarrow \infty}\hat{\mathbb{E}}[(\int_0^T |Z_s^n-Z_s|^2 ds)^{\frac{\alpha}{2}}]=0,\ \  \lim_{n\rightarrow \infty} \hat{\mathbb{E}}[\sup_{t\in[0,T]}|A_t^n-A_t|^\alpha]=0.\]
We then show that $(Y,Z,A)$ is a solution of the reflected $G$-BSDE with an upper obstacle. The fact that $Y$ is below the obstacle process $S$ can be derived easily from Lemma \ref{th3}. It remains to check that $\{-\int_0^t (S_s-Y_s)dA_s\}_{t\in[0,T]}$ is a decreasing $G$-martingale. Set \begin{displaymath}
\widetilde{K}_t^n:=\int_0^t (S_s-Y_s)dK_s^n.
\end{displaymath}
Since $S-Y$ is a nonnegative process in $S_G^\alpha(0,T)$, by Lemma 3.4 in \cite{HJPS1}, $\widetilde{K}^n$ is a decreasing $G$-martingale. Note that
\begin{align*}
\sup_{t\in[0,T]}|-\int_0^t (S_s-Y_s)dA_s-\widetilde{K}_t^n|
\leq &\sup_{t\in[0,T]}\{|\int_0^t Y_sdA_s-\int_0^tY_sdA_s^n|+|\int_0^t (Y_s -Y_s^n)dA_s^n|\\
&+|\int_0^t (Y_s -Y_s^n) dK_s^n|+|\int_0^t -(S_s-Y_s^n) dL_s^n|\}\\
\leq &\sup_{t\in[0,T]}\{|\int_0^t\widetilde{Y}_s^m d(A_s^n -A_s)|+|\int_0^t (Y_s-\widetilde{Y}_s^m)d(A_s^n -A_s)|\}\\
&+\sup_{t\in[0,T]}|Y_s-Y_s^n|[|var(A^n)|+|K_T^n|]+\sup_{t\in[0,T]}(Y_s^n-S_s)^+|L_T^n|,
\end{align*}
where $\widetilde{Y}_t^m=\sum_{i=0}^{m-1}Y_{t_i^m} I_{[t_{i}^m,t_{i+1}^m)}(t)$ and $t_i^m=\frac{iT}{m}$, $i=0,1,\cdots,m$. Recalling Lemma \ref{th2}-\ref{th7}, by a similar analysis as in the proof of Theorem 5.1 in \cite{lp}, we have
\begin{displaymath}
\lim_{n\rightarrow\infty}\hat{\mathbb{E}}[\sup_{t\in[0,T]}|-\int_0^t (S_s-Y_s)dA_s-\widetilde{K}_t^n|]\leq C(\hat{\mathbb{E}}[\sup_{t\in[0,T]}|Y_s-\widetilde{Y}_s^m|^2])^{1/2}.
\end{displaymath}
Applying Lemma 3.2 in \cite{HJPS1} and letting $m\rightarrow\infty$, it follows that
\[\lim_{n\rightarrow\infty}\hat{\mathbb{E}}[\sup_{t\in[0,T]}|-\int_0^t (S_s-Y_s)dA_s-\widetilde{K}_t^n|]=0,\]
which implies that $\{-\int_0^t (S_s-Y_s)dA_s\}$ is a decreasing $G$-martingale.

In the following, we prove that the solution constructed by the penalization procedure is the largest one. Suppose that $(Y',Z',A')$ is the solution of the reflected $G$-BSDE with parameters $(\xi,f,S)$ and $Y'_t\leq S_t$, $0\leq t\leq T$, we have
\begin{displaymath}
Y'_t=\xi+\int_t^T f(s,Y'_s,Z'_s)ds-\int_t^T n(Y'_s-S_s)^+ds-\int_t^T Z'_sdB_s+(A'_T-A'_t).
\end{displaymath}
Comparing with $G$-BSDE \eqref{eq1} and applying Theorem \ref{th12}, we can easily check that for all $n\in\mathbb{N}$, $Y'_t\leq Y_t^n$. Letting $n\rightarrow \infty$, we conclude that $Y'_t\leq Y_t$.
\end{proof}

\begin{remark}\label{th8}
The assumption (A3) and (A4) on $S$ and $\xi$ can be weakened in the following sense:
\begin{description}
\item[(A5)] There exist $\{\xi^n\}_{n\in\mathbb{N}}\subset L_G^\beta(\Omega_T)$ and sequence $\{S^n\}_{n\in\mathbb{N}}$ of $G$-It\^{o} processes
\begin{displaymath}
S^n_t=S^n_0+\int_0^t b^n(s)ds+\int_0^t l^n(s)d\langle B\rangle_s+\int_0^t \sigma^n(s)dB_s,
\end{displaymath}
with $\{b^n(t)\}$, $\{l^n(t)\}$ belong to $M_G^\beta(0,T)$ and $\{\sigma^n(t)\}$ belong to $H_G^\beta(0,T)$ for all $n\in\mathbb{N}$. Furthermore,  $\sup_{n\in\mathbb{N}}\hat{\mathbb{E}}[\sup_{t\in[0,T]}\{|b^n(t)|^\beta+|l^n(t)|^\beta+|\sigma^n(t)|^\beta\}]<\infty$, $\xi^n\leq S^n_T$ and $\xi^n\rightarrow \xi$, $\sup_{t\in[0,T]}|S_t^n-S_t|\rightarrow 0$ both quasi-surely and in $L_G^\beta(\Omega_T)$ as $n\rightarrow\infty$.
\end{description}

Under (A1), (A2) and (A5),  we need to consider the following family of $G$-BSDEs parameterized by $n=1,2,\ldots$.
\begin{displaymath}
Y_t^n=\xi^n+\int_t^T f(s,Y_s^n,Z_s^n)ds-n\int_t^T(Y_s^n-S_s^n)^+ds-\int_t^T Z_s^ndB_s-(K_T^n-K_t^n).
\end{displaymath}

Similar analysis as above, the reflected $G$-BSDE with parameters $(\xi,f,S)$ has at least one solution.
\end{remark}

\begin{remark}\label{th13}
If we further assume that the process $A$ satisfies the following condition:
\begin{description}
\item[(iv)] $A_t=A_t^1-A_t^2$, $t\in[0,T]$, where $A^i\in S_G^\alpha(0,T)$, $i=1,2$, $-A^1$ is a decreasing $G$-martingale, $A^2$ is an increasing process such that $\int_0^T (S_s-Y_s)dA_s^2=0$.
\end{description}
Then the solution satisfying (i), (ii) and (iv) of the reflected $G$-BSDE with parameters $(\xi,f,S)$ is unique.

Assume that $(Y,Z,A)$ and $(\tilde{Y},\tilde{Z},\tilde{A})$ are solutions of the reflected $G$-BSDE satisfying (i), (ii) and (iv). Let $\hat{Y}_t=Y_t-\tilde{Y}_t$, $\hat{Z}_t=Z_t-\tilde{Z}_t$, $\hat{f}_t=f(t,Y_t,Z_t)-f(t,\tilde{Y}_t,\tilde{Z}_t)$, $\hat{A}_t=A_t-\tilde{A}_t$.  For any $r,\varepsilon>0$, applying It\^{o}'s formula to $\bar{Y}_t^{\frac{\alpha}{2}}e^{rt}=(|\hat{Y}_t|^2+\varepsilon_\alpha)^{\frac{\alpha}{2}}e^{rt}$, where $\varepsilon_\alpha=\varepsilon(1-\alpha/2)^+$, we get
\begin{displaymath}
\begin{split}
&\quad \bar{Y}_t^{\alpha/2}e^{rt}+\int_t^T re^{rs}\bar{Y}_s^{\alpha/2}ds+\int_t^T \frac{\alpha}{2} e^{rs}
\bar{Y}_s^{\alpha/2-1}(\hat{Z}_s)^2d\langle B\rangle_s\\
&=
\alpha(1-\frac{\alpha}{2})\int_t^Te^{rs}\bar{Y}_s^{\alpha/2-2}(\hat{Y}_s)^2(\hat{Z}_s)^2d\langle B\rangle_s
+\int_t^T{\alpha} e^{rs}\bar{Y}_s^{\alpha/2-1}\hat{Y}_s\hat{f}_sds\\
&\quad +\int_t^T\alpha e^{rs}\bar{Y}_s^{\alpha/2-1}\hat{Y}_sd\hat{A}_s-\int_t^T\alpha e^{rs}\bar{Y}_s^{\alpha/2-1}\hat{Y}_s\hat{Z}_sdB_s.
\end{split}
\end{displaymath}
From the assumption of $f$, we have
\begin{align*}
&\int_t^T{\alpha} e^{rs}\bar{Y}_s^{\alpha/2-1}\hat{Y}_s\hat{f}_sds
\leq\int_t^T{\alpha}e^{rs}\bar{Y}_s^{\frac{\alpha-1}{2}}L(|\hat{Y}_s|+|\hat{Z}_s|)ds\\
\leq &(\alpha L+\frac{\alpha^2L}{\underline{\sigma}^2(\alpha-1)})\int_t^T e^{rs}\bar{Y}_s^{\frac{\alpha}{2}}ds+\frac{\alpha(\alpha-1)}{4}\int_t^T e^{rs}
\bar{Y}_s^{\alpha/2-1}(\hat{Z}_s)^2d\langle B\rangle_s.
\end{align*}
By condition (iv), it is easy to check that
\begin{align*}
&\int_t^T\alpha e^{rs}\bar{Y}_s^{\alpha/2-1}\hat{Y}_sd\hat{A}_s\\
=&\int_t^T\alpha e^{rs}\bar{Y}_s^{\alpha/2-1}\hat{Y}_sd(A_s^1-\tilde{A}^1_s)+\int_t^T\alpha e^{rs}\bar{Y}_s^{\alpha/2-1}\hat{Y}_sd(\tilde{A}^2_s-A_s^2)\\
\leq &\int_t^T\alpha e^{rs}\bar{Y}_s^{\alpha/2-1}(\hat{Y}_s)^-d\tilde{A}^1_s+\int_t^T\alpha e^{rs}\bar{Y}_s^{\alpha/2-1}(\hat{Y}_s)^+dA^1_s.
\end{align*}
Let $M_t=\int_0^t\alpha e^{rs}\bar{Y}_s^{\alpha/2-1}\hat{Y}_s\hat{Z}_sdB_s-\int_0^t\alpha e^{rs}\bar{Y}_s^{\alpha/2-1}(\hat{Y}_s)^-d\tilde{A}^1_s-\int_0^t\alpha e^{rs}\bar{Y}_s^{\alpha/2-1}(\hat{Y}_s)^+dA^1_s$. Then it is a $G$-martingale. Let $r=\alpha L+\frac{\alpha^2L}{\underline{\sigma}^2(\alpha-1)}+1$, we have
\begin{displaymath}
\bar{Y}_t^{\alpha/2}e^{rt}+(M_T-M_t)\leq 0.
\end{displaymath}
Taking conditional expectations on both sides, it follows that $Y\equiv\tilde{Y}$.  By applying It\^{o}'s formula to $(Y_t-\tilde{Y}_t)^2(\equiv0)$ on $[0,T]$ and taking expectations, we get
\begin{displaymath}
\hat{\mathbb{E}}[\int_0^T (Z_s-\tilde{Z}_s)^2d\langle B\rangle_s]=0,
\end{displaymath}
which implies $Z\equiv \tilde{Z}$. Then it is easy to check that $A\equiv \tilde{A}$.

\end{remark}





\end{document}